\documentclass[11pt,reqno]{amsart} 
\usepackage[left=3.2cm,right=3.2cm,bottom=4.8cm]{geometry}
\usepackage{amsfonts}
\usepackage{amsmath}
\usepackage{amssymb}
\usepackage{mathtools}
\usepackage{amsthm}
\usepackage{pgfplots}
\pgfplotsset{compat=newest}
\usepackage[shortlabels]{enumitem}
\usepackage{tikz}   
\usepackage{tikz-cd}  
\usepackage{changepage}
\usetikzlibrary{arrows}
\usepackage{bm}
\usepackage{graphicx}
\usepackage{cases}
\usepackage{appendix}
\usepackage{esint}
\usepackage{hyperref}
\usepackage{mathrsfs}
\usepackage{mathrsfs}
\numberwithin{equation}{section}

\theoremstyle{definition}
\newtheorem{thm}{Theorem}
\newtheorem*{claim}{Claim}
\newtheorem{defn}[thm]{Definition}
\newtheorem{lem}[thm]{Lemma}
\newtheorem{prop}[thm]{Proposition}
\newtheorem{cor}[thm]{Corollary}
\newtheorem*{rmk}{Remark}
\newtheorem*{ex}{Example}
\newtheorem*{question}{Question}
\numberwithin{thm}{section}

\newcommand{\R}{\mathbb{R}}  

\newcommand{\p}{\partial}  

\newcommand{\dif}{\textup{d}} 

\newcommand{\dist}{\textup{dist}}

\newcommand{\loc}{\textup{loc}}

\usepackage{xcolor}
\usepackage{listings}
\begin{document}
\title[Expanding harmonic map flows]{Existence of expanding harmonic map flows to hemispheres} 
\author[Xuanyu Li]{Xuanyu Li}
\address{Department of Mathematics, Cornell University, Ithaca, NY 14853, USA}
\email{xl896@cornell.edu}

\begin{abstract}
    We show the existence of non-trivial self-expanding harmonic map flows starting from non-energy-minimizing 0-homogeneous maps to a regular ball or a closed hemisphere. In particular, given a non-minimizing but stationary 0-homogeneous harmonic map $u_0$ to a closed hemisphere, we construct infinitely many different weak solutions to harmonic map flow starting from $u_0$, all of which satisfy the parabolic monotonicity formula. We also use a concatenation argument to solve a non-trivial smooth harmonic map flow starting from a given non-minimizing regular 0-homogeneous harmonic map, thereby yielding the non-uniqueness of harmonic map flow with monotonicity formula. This answers a question of Struwe.
\end{abstract}

\maketitle

\section{introduction}
The harmonic map flow arises naturally from the variational theory of energy functional. For maps from $\R^n$ to a closed Riemann manifold $M^m$, the energy functional is defined as
$$E_{\Omega}(v)=\int_{\Omega}\vert\nabla v\vert^2\dif x,\text{ for }v\in W^{1,2}_{\loc}(\R^n,M).$$
Harmonic map flow is formally the $L^2$ gradient flow of this energy functional. For simplicity we assume $M\subset\R^L$ is isometrically embedded in some large Euclidean space, with second fundamental form $A$. Given an initial map $u_0\in W^{1,2}_{\loc}(\R^n,M)$, we say $u\in C^{\infty}(\R^n\times(0,T),M)$ is the \textbf{harmonic map flow} starting from $u_0$, if $u$ satisfies
\begin{itemize}
    \item $\p_t u=\Delta u+A_u(\nabla u,\nabla u)$ in $\R^n\times(0,T)$;
    \item $u=u_0$ on $\R^n\times\lbrace0\rbrace$ in the sense of trace;
    \item $u(\cdot,t)\rightarrow u_0$ strongly in $W^{1,2}_{\loc}$ as $t\rightarrow0$.
\end{itemize}

As a nonlinear evolution problem, the uniqueness of the harmonic map flow does not follow directly from a priori estimates and has therefore been extensively studied. In dimension two, Coron \cite{Coron} and Bethuel, Coron and Ghidaglia \cite{BethuelCoronGhidaglia} first constructed the non-unique weak solutions. However, none of the weak solutions obtained in these works satisfies either the energy monotonicity formula or the parabolic stationarity condition. Moreover, Freire \cite{Freire}, Rupflin \cite{Rupflin} and L. Wang \cite{LuWang} showed that, in two dimensions, harmonic map flows satisfying the energy monotonicity property are unique. Motivated by these results, Struwe raised the following question; see \cite[p. 291]{StruweGeometricEvolution} or \cite[Section 2.4]{StruweNormalizedFlow}.
\begin{question}[Struwe]
    In arbitrary dimensions, suppose that $u_1$ and $u_2$ are weak solutions to the harmonic map flow with the same initial map. If both solutions satisfy the energy monotonicity formula, or the parabolic stationary condition introduced by Feldman \cite{Feldman}, must it follow that $u_1=u_2$? 
\end{question}

We note that Ilmanen also raised the problem of non-uniqueness for the harmonic map flow starting from singular initial data (see the unpublished discussion cited in \cite[Lecture 4]{IlmanenLectureNotes}). We refer the reader to Section~\ref{sec: preliminaries} for the precise statement of the energy monotonicity formula.

Particular counterexamples to Struwe’s question have been constructed in equivariant settings by M. Hong \cite{Hong}, Germain and Rupflin \cite{GermainRupflin}, and Pierre Germain, Ghoul and Miura \cite{GermainGhoulMiura}. However, the general case remains largely open. In this paper, we provide a negative answer to Struwe’s question for a broad class of harmonic map flows by establishing the following existence results.

\begin{thm}\label{thm: main theorem 2}
    Suppose $3\leqslant n\leqslant6$. For any regular 0-homogeneous map $u_0:\R^n\rightarrow\overline{S^m_+}$, there exists an expanding harmonic map flow $u\in C^{\infty}(\R^n\times[0,\infty)\setminus\lbrace(0,0)\rbrace,\overline{S^m_+})$ starting from $u_0$.
\end{thm}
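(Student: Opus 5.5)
We look for a self-similar solution $u(x,t)=U(x/\sqrt t)$. This reduces the problem to an elliptic equation for $U:\R^n\to\overline{S^m_+}$:
\begin{equation*}
\Delta U+\tfrac{x}{2}\cdot\nabla U+A_U(\nabla U,\nabla U)=0,
\end{equation*}
with the boundary condition $U(r\theta)\to u_0(\theta)$ as $r\to\infty$. This equation is the Euler--Lagrange equation of the weighted energy
\begin{equation*}
E_w(U)=\int_{\R^n}|\nabla U|^2e^{|x|^2/4}\,\dif x,
\end{equation*}
or rather of a renormalized version of it. Since $u_0$ is 0-homogeneous, $|\nabla u_0|^2\sim|x|^{-2}$ and the raw energy diverges, so I would minimize the difference energy $\int(|\nabla U|^2-|\nabla \tilde U_0|^2)e^{|x|^2/4}$, where $\tilde U_0$ is a smoothing of $u_0$ near the origin. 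Alternatively one can minimize on balls $B_R$ with Dirichlet data $u_0$ and let $R\to\infty$.

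The plan proceeds in four steps.

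\textbf{Step 1 (existence on bounded domains).} On $B_R$, minimize $\int_{B_R}|\nabla U|^2e^{|x|^2/4}$ among $W^{1,2}$ maps into $\overline{S^m_+}$ with $U=u_0$ on $\partial B_R$. The direct method gives a minimizer $U_R$.

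\textbf{Step 2 (regularity).} Because the target is a closed hemisphere, which is a regular ball with convex distance function, the regularity theory of Hildebrandt--Kaul--Widman (or Schoen--Uhlenbeck for targets supporting a strictly convex function) applies. The weight $e^{|x|^2/4}$ is a smooth conformal-type perturbation that does not affect the small-scale arguments. I expect interior smoothness of $U_R$, with the maximum principle keeping the image in the closed hemisphere. Here the restriction $3\le n\le 6$ should enter: for hemisphere targets, Schoen--Uhlenbeck and Jäger--Kaul type results show that minimizing tangent maps into $\overline{S^m_+}$ are constant when $n\le 6$. This gives full regularity, and smoothness up to the boundary follows from the regularity of $u_0$ on $\partial B_R$.

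\textbf{Step 3 (uniform estimates and the limit $R\to\infty$).} The key estimate is a decay bound $|\nabla U_R(x)|\le C/|x|$ and $|U_R(x)-u_0(x)|\le C|x|^{-2}$ for large $|x|$, uniform in $R$. This would come from comparison or barrier arguments using the drift term $\frac{x}{2}\cdot\nabla$. Writing $U=u_0+w$, the linearized operator $\Delta+\frac{x}{2}\cdot\nabla$ applied to $u_0$ gives $\Delta u_0+A(\nabla u_0,\nabla u_0)=O(|x|^{-2})$, and $w$ solves a drift equation whose dominant part forces decay of order $|x|^{-2}$. Together with uniform interior $C^{k}$ bounds from Step 2 (via small weighted-energy regularity and local minimality), a subsequence converges smoothly on compacts to a solution $U$ with the same decay. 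Then $u(x,t)=U(x/\sqrt t)$ is smooth on $\R^n\times(0,\infty)$. Moreover $u(\cdot,t)\to u_0$ smoothly away from the origin, with $W^{1,2}_{\loc}$ convergence by the $|x|^{-1}$ gradient bound and dominated convergence, since $n\ge3$ makes $|x|^{-2}$ locally integrable. This yields smoothness on $\R^n\times[0,\infty)\setminus\{(0,0)\}$.

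\textbf{Main obstacle.} I expect the main obstacle to be the uniform-in-$R$ asymptotic control at infinity, because the weight grows rapidly and interior estimates alone say nothing about the behavior near $\partial B_R$. My first attempt would be to construct explicit barriers of the form $\dist_{S^m}(U,u_0)\le C|x|^{-2}$. These would be built from the subharmonicity of $\dist^2$ to a point in the convex hemisphere composed with the harmonic-map-type equation, and handled with the maximum principle for the drift Laplacian on annuli $\{R_0\le|x|\le R\}$.
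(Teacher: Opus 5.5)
Your reduction to weighted minimizers and your use of Schoen--Uhlenbeck regularity for $n\le6$ match the paper. The gap is in Step 3: you correctly flag it as the crux, but you do not resolve it, and the fix you propose cannot work on this target.

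Passing $R\to\infty$ needs two estimates that are uniform in $R$.
\begin{enumerate}
\item A local energy bound $\int_{B_\rho}|\nabla U_R|^2\le C(\rho)$. Schoen--Uhlenbeck compactness and $\epsilon$-regularity both require it. Minimality only compares $U_R$ with $u_0$ globally: $\int_{B_R}|\nabla U_R|^2e^{|x|^2/4}\le\int_{B_R}|\nabla u_0|^2e^{|x|^2/4}$. The right side grows like $e^{R^2/4}$ and says nothing on a fixed ball. ``Local minimality'' does not help without control of $U_R$ on the boundary of that ball.
\item Decay at infinity uniform in $R$. This is needed so that the limit attains $u_0$ and is smooth up to $t=0$ away from the origin.
\end{enumerate}

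Your barrier for the second estimate relies on convexity of the target, but $\overline{S^m_+}$ is \emph{not} a regular ball: its radius is exactly $\pi/2$, not $<\pi/2$. The tangential Hessian of $\dist^2(\cdot,p)$ vanishes on the equator. The J\"ager--Kaul function $\theta(U,u_0)$ has $\cos\dist(u_0,p)$ in its denominator, so it is $+\infty$ wherever $u_0$ meets the equator. For equator-valued data such as $u_0=(x/|x|,0)$, the main case of interest, that is everywhere.

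The same misidentification affects Step 2. Hildebrandt--Kaul--Widman does not apply; for $n\ge7$ the equator map is a singular minimizer into $\overline{S^n_+}$. Only the Schoen--Uhlenbeck result for $n\le6$ is usable there. You also need the reflection $v\mapsto(v',|v_{m+1}|)$ to see that the hemisphere-constrained minimizer solves the unconstrained Euler--Lagrange equation.

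The missing idea is the paper's perturbation into the open hemisphere.
\begin{enumerate}
\item Replace $u_0$ by $u_0^\sigma$, whose image lies in a regular ball. Theorem~\ref{thm: main theorem 1} then gives a unique expander $u^\sigma$ on all of $\R^n$, with no lateral boundary.
\item Struwe's monotonicity formula and $\epsilon$-regularity give $|\nabla u^\sigma(x,t)|+\sqrt t\,|\p_t u^\sigma(x,t)|\le C/|x|$ for $|x|^2\ge At$, with constants independent of $\sigma$. This replaces your barrier.
\item The local energy bound comes from stability. The function $f=(u^\sigma)_{m+1}\ge\sigma/\sqrt{1+\sigma^2}>0$ is a positive solution of $\Delta_g f+|\nabla^g u^\sigma|^2f=0$. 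This yields $\int|\nabla u^\sigma|^2\varphi^2e^{|x|^2/4}\le\int|\nabla\varphi|^2e^{|x|^2/4}$ for all $\varphi\in C^\infty_c(\R^n)$.
\item Minimality of $u^\sigma$ with respect to $g$ follows from J\"ager--Kaul uniqueness in the regular ball plus the reflection trick. This lets Schoen--Uhlenbeck compactness and regularity pass to the limit $\sigma\to0$.
\item The initial trace is recovered from the localized energy inequality and Aubin--Lions.
\end{enumerate}

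To keep your ball-exhaustion scheme you would need substitutes for both estimates. The decay estimate in particular requires control of $U_R$ near $|x|=R$, where the whole-space monotonicity argument used in the paper is unavailable.
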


The original question of Struwe ask the non-uniqueness of harmonic map flow whose domain is compact. All the domains for all previous examples are Euclidean spaces, which are non-compact. We also show via a concatenation argument that Theorem \ref{thm: main theorem 2} implies the non-uniqueness in the bounded domain.
\begin{cor}\label{thm: non-uniqueness on bounded domain}
    Suppose $3\leqslant n\leqslant 6$. For any regular 0-homogeneous map $u_0:\R^n\rightarrow\overline{S^m_+}$, there exists $T>0$ and a harmonic map flow $u\in C^{\infty}(\bar B_1\times(0,T],\overline{S^m_+})$ starting from $u_0$ and satisfying the boundary condition $u(\cdot,t)|_{\p B_1}=u_0|_{\p B_1}$ for all $t\in[0,T]$.
\end{cor}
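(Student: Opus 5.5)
The plan is to restrict the expanding flow of Theorem~\ref{thm: main theorem 2} to the unit ball for short times and then correct the boundary values by a localized perturbation; this is the concatenation argument. Let $U(x,t)=U(x/\sqrt t)$ be the expander from Theorem~\ref{thm: main theorem 2}, with profile $U$ smooth on $\R^n$. I first record its behaviour away from the origin. Since $U$ is smooth on $\R^n\times[0,\infty)\setminus\lbrace(0,0)\rbrace$ and equals $u_0$ at $t=0$, on the annulus $\lbrace 1/2\le |x|\le 2\rbrace$ we have $U(\cdot,t)\to u_0$ in $C^k$ as $t\to0$ for every $k$. The self-similar structure gives more: the profile satisfies $U(y)-u_0(y/|y|)=O(|y|^{-2})$ together with all derivatives, so in fact $\|U(\cdot,t)-u_0\|_{C^k(\{1/2\le|x|\le2\})}\lesssim_k t$. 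This bound comes from the expander equation $\Delta U+\frac y2\cdot\nabla U+A_U(\nabla U,\nabla U)=0$ together with the convergence at infinity.

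Next I build the bounded-domain flow. Fix a cutoff $\chi$ with $\chi=1$ on $B_{1/2}$ and $\operatorname{supp}\chi\subset B_{3/4}$. On the annulus I interpolate between $U(\cdot,t)$ and $u_0$, and then project back to the target using the fact that $\overline{S^m_+}$ is convex in the sense that geodesic interpolation stays in the hemisphere. A convenient choice is
\[
w=\frac{\chi U+(1-\chi)u_0}{|\chi U+(1-\chi)u_0|}.
\]
The denominator is close to $1$ on the annulus for small $t$, and since the convex combination of two points in the closed upper half space stays in the upper half space, the normalization keeps $w$ in $\overline{S^m_+}$. This approximate solution $w$ equals $U$ near the origin and equals $u_0$ near $\p B_1$. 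The error $F=\p_tw-\Delta w-A_w(\nabla w,\nabla w)$ is supported in the annulus and is $O(1)$ in $C^k$ uniformly for small $t$; here I use that $u_0$ is a harmonic map on the annulus only if $u_0$ is harmonic, so in general the error is merely bounded. Indeed the statement allows arbitrary regular $u_0$, and then $u_0$ itself is not stationary near $\p B_1$.

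I then seek $u=w+v$, with $v$ satisfying $v=0$ on $\p B_1$ and $v(0)=0$, solving the linearized-plus-nonlinear equation near the annulus. Because the singularity of $w$ lies at the origin, where $w$ is an exact solution, the equation for $v$ has the source $F$ supported away from the singular point. I solve it by a fixed point in a weighted parabolic H\"older space on $B_1\times(0,T]$, using Schauder estimates for the heat equation with Dirichlet data. The key point is that $F$ is bounded, so $v=O(t)$ and the small-time contraction works for small $T$. The difficulty is that the linearized coefficients involve $\nabla w$, which blows up like $t^{-1/2}$ near the origin, so I would use weights adapted to the parabolic scaling $|x|+\sqrt t$. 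To ensure the image stays in the target, I solve in the ambient space and then show the solution takes values in $S^m$ by the standard argument, namely that $|u|^2-1$ satisfies a linear parabolic equation with zero data. I then show it stays in the closed hemisphere by the maximum principle applied to the last coordinate $u^{m+1}$, which satisfies $\p_tu^{m+1}-\Delta u^{m+1}=|\nabla u|^2u^{m+1}$ with nonnegative boundary and initial data. Finally, $u(\cdot,t)\to u_0$ in $W^{1,2}$ follows from the corresponding convergence of $U$ together with $v\to0$.

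The main obstacle is the fixed-point estimate near the singular point $(0,0)$. The plan is to avoid it by choosing $v$ supported essentially in the annulus. To do this I treat the problem on the annulus region as a smooth parabolic problem with smooth data, since there everything is uniformly smooth, and I control the interaction with the interior via the estimate $\|v\|_{C^{2,\alpha}}=O(t)$ on the annulus. Near the origin I would compare with $U$, using the fact that the linearization at the expander is invertible in the weighted space for small times; this is equivalent to the invertibility of the expander's Jacobi operator modulo a compact perturbation.
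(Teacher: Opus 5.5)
\textbf{There is a genuine gap: the fixed-point step near the origin, which you flag as the main obstacle, is never closed.}

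\emph{Why the contraction fails.} With a gluing error $F$ that is only $O(1)$, your space has $\|v(\cdot,t)\|_{L^\infty}+\sqrt t\,\|\nabla v(\cdot,t)\|_{L^\infty}\lesssim Mt$. The linear terms $|\nabla w|^2v$ and $2\langle\nabla w,\nabla v\rangle w$ have coefficients of size $t^{-1}$ and $t^{-1/2}$ on $\{|x|\lesssim\sqrt t\}$, and they are invariant under parabolic rescaling about $(0,0)$. The Duhamel estimate therefore gives only
\[
\Big\|\int_0^te^{(t-s)\Delta_D}\big(|\nabla w|^2v\big)(\cdot,s)\,\dif s\Big\|_{L^\infty}\leqslant\int_0^t\frac{C}{s}\,Ms\,\dif s=CMt,
\]
where $C$ depends on $\sup|\nabla U|$. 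So the linear part has norm of order one, independently of $T$. The chain ``$F$ bounded, hence $v=O(t)$, hence contraction for small $T$'' is therefore false. Weights in $|x|+\sqrt t$ do not help, since they are homogeneous under the same scaling.

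\emph{Why the fallback does not rescue it.} Your instinct that $v$ lives essentially in the annulus is right: its influence near the origin is in fact $O(e^{-c/t})$. But nothing in your function space encodes this. Instead you appeal to invertibility of the linearization at the expander ``modulo a compact perturbation''. That is asserted, not proved. Invertibility modulo compacts is only Fredholmness, and the Jacobi operator of an expander can have a nontrivial decaying kernel (differentiate the family $\tilde u^\theta$ of the paper's Example in $\theta$).

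\emph{Your last step has the same problem.} For $f=1-|u|^2$ one has $\partial_tf-\Delta f=2|\nabla u|^2f$ with $2|\nabla u|^2\leqslant C/t$. The maximum principle from time $\delta$ gives only $\|f(\cdot,t)\|_{L^\infty}\leqslant\|f(\cdot,\delta)\|_{L^\infty}(t/\delta)^C$. This forces $f\equiv0$ only if $\|f(\cdot,\delta)\|_{L^\infty}=o(\delta^C)$, and $f=O(\delta)$ is not enough. The same applies to the negative part of $u^{m+1}$.

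\textbf{The missing idea is that the gluing error must be exponentially small in $t$.} The paper gets this from harmonicity of $u_0$.
\begin{itemize}
\item Proposition~\ref{prop: concatenation} assumes $u_0$ is a stationary harmonic map, so both pieces of $\hat u$ are exact solutions.
\item Deruelle's asymptotics (Lemma~\ref{lem: asymptotic of expander}) give $|\nabla^i\partial_t^j(u-u_0)|\leqslant e^{-c/t}$ on $B_1\setminus B_{1/3}$, hence $|\mathcal E(\hat u)|\leqslant e^{-2c_0/t}$.
\item In norms weighted by $e^{c_0/t}$, the bound $\int_0^ts^{-1}e^{-c_0/s}\,\dif s\leqslant(t/c_0)e^{-c_0/t}$ turns the critical $s^{-1}$ coefficient into a gain of a power of $T$, so the contraction closes.
\item The same smallness, $3e^{-c_0/\delta}\delta^{-C}\to0$, makes the flow sphere-valued.
\end{itemize}

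Your observation that for non-harmonic $u_0$ the error near $\partial B_1$ is minus the tension field of $u_0$, hence $O(1)$, is correct. The paper's concatenation is only carried out under the harmonicity hypothesis.

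To treat general $u_0$ you would need an extra step that restores exponential smallness where the expander is glued in. One natural route exploits the fact that this $O(1)$ error sits at distance at least $1/2$ from the origin. For instance:
\begin{itemize}
\item Solve the flow on the annulus $\{1/4<|x|<1\}$ with data $u_0$ at $t=0$ and on $\partial B_1$, and data $U$ on $\{|x|=1/4\}$.
\item Glue $U$ to this annular flow near $|x|=1/3$, where the two should differ only by $O(e^{-c/t})$.
\item Then run the exponentially weighted contraction.
\end{itemize}
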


For the case $n\geqslant7$, we are still able to get nontrivial harmonic map flow starting from a map that is not energy minimizing, although a singular set of codimension 7 may occur:
\begin{thm}\label{thm: main theorem 3}
    Suppose $n\geqslant7$. For any regular $0$-homogeneous map $u_0:\R^n\rightarrow \overline{S^m_+}$ that is not energy minimizing with respect to the standard Euclidean metric, there exists a non-trivial expanding harmonic map flow $u\in W^{1,2}_{\loc}(\R^n\times(0,\infty),\overline{S^m_+})$ starting from $u_0$ which is smooth near $(\R^n\setminus\lbrace0\rbrace)\times\lbrace0\rbrace$. Moreover, $u(\cdot,1)$ viewed as a map from $\R^n$ to $\overline{S^m_+}$ is regular away from a compact set $\Sigma$ with Hausdorff dimension no greater than $n-7$. When $n=7$, $\Sigma$ is discrete. In particular, $u$ satisfies the parabolic stationary condition.
\end{thm}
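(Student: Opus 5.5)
The plan is to reduce the construction of an expanding solution to an elliptic variational problem for the profile $v=u(\cdot,1)$. Writing $u(x,t)=v(x/\sqrt t)$, the flow equation becomes
\[
\Delta v+\tfrac{x}{2}\cdot\nabla v+A_v(\nabla v,\nabla v)=0,
\]
which is the Euler--Lagrange equation of the weighted energy
\[
E_\rho(v)=\int_{\R^n}|\nabla v|^2e^{|x|^2/4}\,\dif x .
\]
This energy is infinite for maps asymptotic to $u_0$. I will therefore work with the renormalized functional
\[
\mathcal E(v)=\int_{\R^n}\bigl(|\nabla v|^2-|\nabla u_0|^2\bigr)e^{|x|^2/4}\,\dif x
\]
on the class of maps $v$ with $v-u_0$ decaying suitably at infinity, for instance $v=u_0$ outside $B_R$ before letting $R\to\infty$. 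Because $u_0$ is $0$-homogeneous, the cross term against $u_0$ is finite: integrating by parts, the tension of $u_0$ on the link equals $-\Delta u_0$ weighted by $|x|^{-2}$, so it decays after testing against compactly supported variations.

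\textbf{Step 1: existence of a minimizer with controlled decay.} I minimize $\mathcal E$ on $\bar B_R$ with boundary data $u_0$ and target $\overline{S^m_+}$. The target being a closed hemisphere is what provides a convex supporting function $x_{m+1}\ge0$ and hence a maximum principle. This yields energy-minimizers whose partial regularity is of Schoen--Uhlenbeck type, with singular set of dimension at most $n-7$ and discrete when $n=7$. This is where the codimension-$7$ statement comes from; for hemispheres the minimizers have no singularities in lower dimensions. Next I let $R\to\infty$. The key estimate is a uniform bound of the form
\[
\mathcal E_R(v_R)\le \mathcal E_R(u_0)=0 ,
\]
together with the decay $|v_R-u_0|\lesssim|x|^{-n}e^{-|x|^2/4}$ near infinity. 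I would obtain the decay from a barrier or maximum-principle argument for the linearized expander operator $\Delta+\tfrac x2\cdot\nabla$ away from the singular set, using that $u_0$ is regular on $\R^n\setminus\{0\}$. Passing to the limit gives a local minimizer $v$ of $\mathcal E$ that is asymptotic to $u_0$, so that $u(\cdot,t)\to u_0$ in $W^{1,2}_{\loc}$ and smoothly away from the origin, which is the smoothness near $(\R^n\setminus\{0\})\times\{0\}$.

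\textbf{Step 2: non-triviality.} This is the main obstacle: a priori the minimizer could be $v=u_0$ itself. Since $u_0$ is not energy-minimizing, there is a competitor $w$ with $w=u_0$ outside $B_1$ and $\int_{B_1}|\nabla w|^2<\int_{B_1}|\nabla u_0|^2$. I rescale to $w_\lambda(x)=w(x/\lambda)$. The unweighted energy difference scales like $\lambda^{n-2}$ times a negative constant, while the weight is at most $e^{\lambda^2/4}$ on $B_\lambda$. The difference in the weight is harmless for small $\lambda$ only if the energy gain is not cancelled. I would handle this by writing
\[
\mathcal E(w_\lambda)=\int_{B_\lambda}\bigl(|\nabla w_\lambda|^2-|\nabla u_0|^2\bigr)e^{|x|^2/4}\,\dif x ,
\]
which is $\lambda^{n-2}\bigl(-c+O(\lambda^2)\bigr)$ using the $L^1$ bound of the integrand, and is negative for small $\lambda$. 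Since $\mathcal E(u_0)=0$, the minimizer has $\mathcal E(v)<0$ and hence differs from $u_0$. Because $u_0$ is an expander profile only if it is $0$-homogeneous and harmonic, non-triviality means that $u$ is not the static solution. Minimality must be taken within the class where the comparison is legitimate, which is why I keep the $\bar B_R$ truncation with $R>\lambda$.

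\textbf{Step 3: regularity and stationarity.} Compactness of $\Sigma$ follows because $v$ is smooth and close to $u_0$ for large $|x|$, by the decay from Step 1 and $\varepsilon$-regularity. Stationarity of $v$ with respect to domain variations of $\mathcal E$ holds because $v$ is a minimizer, and it translates into the parabolic stationarity condition and the monotonicity formula for $u(x,t)=v(x/\sqrt t)$ through the self-similar change of variables. Finally, $u\in W^{1,2}_{\loc}$ follows from the locally finite weighted energy of $v$.
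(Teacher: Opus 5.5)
Your route, minimizing a truncated weighted energy on $B_R$ and then letting $R\to\infty$, differs from the paper's. As written, the passage to the limit is not justified. The estimate you call key, $\mathcal E_R(v_R)\le 0$, only gives
$\int_{B_R}|\nabla v_R|^2e^{|x|^2/4}\,\dif x\le\int_{B_R}|\nabla u_0|^2e^{|x|^2/4}\,\dif x$,
and the right-hand side grows like $R^{n-4}e^{R^2/4}$. So it gives no bound on $\int_K|\nabla v_R|^2$, uniform in $R$, for a fixed compact $K$, and the Schoen--Uhlenbeck compactness theorem cannot be applied. Likewise, you assert the decay $|v_R-u_0|\lesssim|x|^{-n}e^{-|x|^2/4}$ uniformly in $R$ without a mechanism. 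A barrier argument for $\Delta+\frac{x}{2}\cdot\nabla$ presupposes that $v_R$ is already $C^1$-close to $u_0$ on $\{A\le|x|\le R\}$, e.g. $|\nabla v_R|\le C/|x|$ there with $A,C$ independent of $R$. That is precisely the hard a priori estimate. Attaining the initial datum, smoothness near $(\R^n\setminus\{0\})\times\{0\}$, and compactness of $\Sigma$ all rest on this decay, so they remain unproved.

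The paper gets these inputs differently.
\begin{itemize}
\item It perturbs $u_0$ to $u_0^\sigma$ with values in a regular ball. Theorem~\ref{thm: main theorem 1} then gives a genuine flow $u^\sigma$ (unique, hence self-similar). Struwe's monotonicity formula and $\varepsilon$-regularity apply to it and give gradient bounds outside $B_A$ uniformly in $\sigma$.
\item The uniform local energy bound comes from a stability argument. The function $(u^\sigma)_{m+1}\ge\sigma/\sqrt{1+\sigma^2}>0$ solves $\Delta f+\frac{x}{2}\cdot\nabla f+|\nabla u^\sigma|^2f=0$, which forces $\int|\nabla u^\sigma|^2\varphi^2e^{|x|^2/4}\le\int|\nabla\varphi|^2e^{|x|^2/4}$ for all $\varphi\in C^\infty_c(\R^n)$.
\item The initial trace comes from the localized energy inequality for the flows plus Aubin--Lions, not from pointwise decay.
\end{itemize}
A positive-Jacobi-field argument applied to $(v_R)_{m+1}$ might rescue your local bound. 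However, it is unavailable if $(v_R)_{m+1}\equiv0$, which can happen for equator-valued $u_0$ when $n\ge7$; this is one reason to perturb.

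Your scaling computation in Step~2 is correct and is essentially the paper's argument, but the conclusion has to be drawn locally. The global statement $\mathcal E_R(v_R)<0$ does not survive $R\to\infty$: it can be produced by deviations near $\partial B_R$, where the weight is huge, so local convergence $v_R\to v$ says nothing about $v\ne u_0$. What you need is that the limit is locally minimizing for the weighted energy. Then $v=u_0$ is excluded because $w_\lambda$ beats $u_0$ on $B_\lambda$ for small $\lambda$.
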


In particular, if $u_0$ is not a stationary harmonic map, then it is not energy minimizing; see F. Lin and C. Wang \cite{LinWangBook} for the definition and properties of stationary harmonic maps. On the other hand, a stationary 0-homogeneous harmonic map may be viewed as a static solution to the harmonic map flow, and hence satisfies the corresponding parabolic monotonicity formula as well as the parabolic stationary condition. For $3\leqslant n\leqslant 6$, such a map cannot be energy minimizing by the regularity result of Schoen and Uhlenbeck \cite{SchoenUhlenbeckSphere}. Moreover, there exist many nontrivial stationary 0-homogeneous harmonic maps into a closed hemisphere. Consequently, Theorem \ref{thm: main theorem 2} and Theorem \ref{thm: main theorem 3} may be interpreted as non-uniqueness results for harmonic map flow with such initial data. Related studies of expanding harmonic map flow starting from 0-homogeneous initial maps were carried out by Deruelle and Lamm \cite{DeruelleLamm}, Deruelle \cite{Deruelle}, and Z. Geng, C. Wang and J. Yu \cite{GengWangYu}. However, in these works either the initial maps cannot be nontrivial harmonic maps, or the resulting flows cannot be shown to be nontrivial when the initial map itself is harmonic.

Here, we denote by $\overline{S^m_+}$ the closed $m$-dimensional upper hemisphere and by $S^m_+$ its interior. We say a map $u_0:\R^n\rightarrow M$ is a \textbf{regular 0-homogeneous map}, if $u_0$ is smooth away from the origin and $u_0(x)=u_0(\lambda x)$ for all $x\in \R^n$ and $\lambda>0$. Such maps arise naturally as singularity models for harmonic maps via the dimension reduction principle introduced by Federer \cite{Federer}. For scaling-invariant initial data of this type, solutions of the harmonic map flow that are of particular interest are the self-similar ones, namely those invariant under the parabolic scaling
$$u(x,t)=u(\lambda x,\lambda^2 t),\text{ for all }x\in\R^n, t>0\text{ and }\lambda>0.$$
Such solutions are called (self-)expanding solutions of the harmonic map flow.

We now explain the derived non-uniqueness. As observed by Coron \cite{Coron}, for each stationary harmonic 0-homogeneous map $u_0$, once we have a non-trivial solution $u$ to harmonic map flow starting from $u_0$, we can define a new solution to harmonic map flow with initial data $u_0$ by time translation. More precisely, for any $T>0$, define
\begin{align*}
    \bar{u}(\cdot,t)=
    \begin{cases} 
        u_0,&\text{ when }t\leqslant T;\\
        u(\cdot,t-T),&\text{ when }t> T.
    \end{cases}
\end{align*}
As a consequence, Theorem \ref{thm: main theorem 2} and \ref{thm: main theorem 3} yield uncountably many distinct solutions of the harmonic map flow starting from $u_0$, all of which satisfy the energy monotonicity formula. We also remark that the smooth solution given by Theorem \ref{thm: main theorem 2} is not necessarily unique, see the following example.
\begin{ex}
    Let $u_0':\R^n\rightarrow S^{n-1}$ be a regular 0-homogeneous harmonic map such that $u_0'|_{\p B_1}: S^{n-1}\rightarrow S^{n-1}$ has non-zero topology degree. View $S^{n-1}$ as the equator of $S^n$, and set $u_0=(u_0',0)$. For $3\leqslant n\leqslant 6$, let $u$ be the solution given by Theorem \ref{thm: main theorem 2}. We claim that $u$ must take the form $(u',u_{n+1})$ for a function $u_{n+1}>0$ in $\R^n\times(0,\infty)$. Indeed, for the sphere target harmonic map flow, the equation takes the form
    $$\p_t u_{n+1}-\Delta u_{n+1}=\vert\nabla u\vert^2 u_{n+1}\geqslant0.$$
    By maximum principle for subsolutions to heat equation, we have either $u_{n+1}>0$ or $u_{n+1}=0$ in $\R^n\times(0,\infty)$. If $u_{n+1}=0$ in $\R^n\times(0,\infty)$, we have that $u'(\omega,t)$ is a homotopy between two $S^{n-1}$-valued maps $u_0'|_{\p B_1}$ and $u'(\cdot,1)|_{\p B_1}$. Therefore, $u'(\cdot,1)|_{\p B_1}:S^{n-1}\rightarrow S^{n-1}$ is a map with non-trivial topology degree. On the other hand, $u'(\cdot,1)\in C^{\infty}(\bar{B}_1,S^{n-1})$ smoothly extends $u'(\cdot,1)|_{\p B_1}$, which is a contradiction because this implies that the topology degree of $u'$ is 0. 

    Hence $u_{n+1}>0$ in $\R^n\times(0,\infty)$. Now, if we embed $S^n$ further to be the equator of $S^{n+1}$, and consider the initial map $\tilde{u}_0=(u_0',0,0)$, then for any $\theta\in(0,\pi/2)$, $\tilde{u}^{\theta}=(u',u_{n+1}\cos\theta,u_{n+1}\sin\theta)$ is an expanding harmonic map flow starting from $\tilde{u}_0$. This tells us that the smooth solution given by Theorem \ref{thm: main theorem 2} is not unique in general.
    
    We conjecture that, if the image of $u_0:\R^n\rightarrow S^m$ is not contained in any m-dimensional linear subspace of $\R^{m+1}$, then the smooth expanding harmonic map flow starting from $u_0$ is unique. For the minimizing harmonic maps from a bounded domain to hemispheres, this was done by Sandier-Shafrir \cite{SandierShafrir}.
\end{ex}

The proofs of Theorem \ref{thm: main theorem 2} and Theorem \ref{thm: main theorem 3} proceed by perturbing $u_0$ to $u_0^{\sigma}$ whose image is contained in a compact subset of the open hemisphere $S^m_+$. One then constructs an expanding solution to the harmonic map flow starting from $u_0^{\sigma}$ and finally lets $\sigma\rightarrow0$. It is known that any compact subset of $S^m_+$ is contained in a regular ball of $S^m$. Recall that, as introduced by Hildebrandt-Kaul-Widman \cite{HildebrandtStefanWidman}, we say a geodesic ball $B_{\rho}(p)$ in a Riemannian manifold $M$ is a \textbf{regular ball} if
\begin{itemize}
    \item $\rho<\frac{\pi}{2\sqrt{\kappa}}$ where $\kappa\geqslant0$ is an upper bound of $\sec_M$;
    \item $B_{\rho}(p)$ does not intersect the cut locus of $p$.
\end{itemize}

\begin{thm}\label{thm: main theorem 1}
    Let $B_{\rho}(p)\subset M$ be a regular ball. For any regular 0-homogeneous map $u_0:\R^n\rightarrow B_{\rho}(p)$, there exists a unique smooth harmonic map flow $u\in C^{\infty}(\R^n\times(0,\infty),B_{\rho}(p))$ starting from $u_0$ which is expanding.
\end{thm}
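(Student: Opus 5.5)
We plan to reduce everything to an elliptic problem for the profile. Writing $u(x,t)=U(x/\sqrt t)$, an expanding solution of the harmonic map flow is equivalent to a map $U:\R^n\to B_{\rho}(p)$ solving the self-expander equation
$$\Delta U+\tfrac{x}{2}\cdot\nabla U+A_U(\nabla U,\nabla U)=0,$$
which is the Euler--Lagrange equation of the weighted energy $\int|\nabla U|^2e^{|x|^2/4}\,\dif x$. The requirement $u(\cdot,t)\to u_0$ in $W^{1,2}_{\loc}$ becomes an asymptotic condition: $U(x)-u_0(x/|x|)\to0$ as $|x|\to\infty$, with suitable decay of derivatives. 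Since this weighted energy is infinite for $0$-homogeneous data, we will not minimize on $\R^n$. Instead we solve on balls.

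\textbf{Existence.} For each $R$ we solve the Dirichlet problem on $B_R$ with boundary values $u_0|_{\p B_R}$. By Hildebrandt--Kaul--Widman, a weighted-energy minimizer with values in the regular ball is smooth and lies in $B_{\rho'}(p)$ for some $\rho'<\rho$. The key point is that the function $\phi=f(d(p,U))$ with $f$ convex (for instance $f(r)=1-\cos(\sqrt\kappa r)$) satisfies $\Delta\phi+\frac{x}{2}\cdot\nabla\phi\ge c|\nabla U|^2\ge0$, because the drift term does not affect the maximum principle. This yields a uniform image bound. The Jäger--Kaul/HKW regularity theory, applied to the drift operator, then gives interior $C^{k}$ bounds independent of $R$. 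To control behaviour at infinity we use barriers. The heat-flow-type solution $v(x,t)=\int \Phi(x-y,t)\,u_0(y)\,\dif y$ (projected where needed) shows that the linear expander of $u_0$ differs from $u_0$ by $O(|x|^{-2})$. We then compare $d(U_R(x),u_0(x/|x|))$ with a radial supersolution of the drift-Laplacian, of the form $C|x|^{-2}$ for $|x|\ge R_0$; this works since $\frac{x}{2}\cdot\nabla$ dominates at large $|x|$. Passing $R\to\infty$ along a subsequence produces a smooth solution $U$ with $|U(x)-u_0(x/|x|)|\le C|x|^{-2}$ and $|\nabla U|\le C|x|^{-1}$. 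Translating back to $u$ gives smoothness on $\R^n\times(0,\infty)$ and $W^{1,2}_{\loc}$ convergence to $u_0$ as $t\to0$.

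\textbf{Uniqueness.} Let $U_1,U_2$ be two expanders with the same asymptotics. The standard tool for regular balls is Jäger--Kaul convexity: the function $\psi=q(U_1,U_2)$, a convex function of the distance (well defined since the ball avoids the cut locus and $\rho<\pi/(2\sqrt\kappa)$), satisfies
$$\Delta\psi+\tfrac{x}{2}\cdot\nabla\psi\ge0.$$
Since $\psi\to0$ at infinity, the maximum principle on large balls gives $\psi\le0$, hence $U_1=U_2$. One point needs care: a general expanding solution attaining $u_0$ only in $W^{1,2}_{\loc}$ must be shown to have profile converging to $u_0$ pointwise at infinity. We will get this from the smoothness up to $t=0$ away from the origin, via local regularity since the image stays in a regular ball, or else state uniqueness within the class of solutions constructed with that decay.

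\textbf{Main obstacle.} We expect the hardest step to be the uniform-in-$R$ decay estimate at infinity, namely the barrier argument in the nonlinear setting. The difficulty is the quadratic term $A_U(\nabla U,\nabla U)$. It has to be absorbed using the a priori gradient bound $|\nabla U|\lesssim |x|^{-1}$, obtained by a scaling/Bernstein-type argument on annuli where $U$ is close to the $0$-homogeneous map. This makes the perturbation $O(|x|^{-2})$, which is compatible with the barrier.
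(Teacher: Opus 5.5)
There is a genuine gap, and it sits at the step you flag as the main obstacle. The missing idea also undermines your uniqueness step.

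\textbf{Existence.} Your barrier comparison under $L=\Delta+\frac{x}{2}\cdot\nabla$ only closes if $|\nabla U_R(x)|\le\epsilon|x|$ with $\epsilon$ small, or $|\nabla U_R(x)|\lesssim|x|^{-1}$, uniformly in $R$. This is true whether you compare $U_R-u_0$, where you must control $A_{U_R}(\nabla U_R,\nabla U_R)$, or the J\"ager--Kaul quantity $\theta(U_R,u_0)$, whose inequality carries a first-order term of size $|\nabla U_R|+|\nabla u_0|$. Nothing in your plan produces such a bound.

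Interior HKW-type estimates are usable only on balls of radius $\sim|x|^{-1}$, where the rescaled drift is $O(1)$. There they give only $|\nabla U_R(x)|\le C|x|$ with an uncontrolled $C$. Upgrading this to $|x|^{-1}$ requires knowing that $U_R$ oscillates by $O(|x|^{-2})$ on $B_{1/|x|}(x)$. That is exactly the closeness to the slowly varying $u_0$ that the barrier was supposed to produce, so the ``scaling argument on annuli where $U$ is close to $u_0$'' is circular.

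A Bernstein argument does not break the circle. For the expander equation, $L|\nabla U|^2=2|\nabla^2U|^2-|\nabla U|^2+(\text{target curvature terms})$, and the curvature terms are only bounded below by $-C|\nabla U|^4$. So the maximum principle yields no decay without an independent smallness input.

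\textbf{Uniqueness.} The same estimate is missing here. You need every expanding solution that attains $u_0$ only in $W^{1,2}_{\loc}$ to converge to $u_0$ pointwise at infinity, and ``local regularity up to $t=0$'' is precisely that missing estimate. A minor point: for $\kappa>0$, $q(d(U_1,U_2))$ is in general not a subsolution. You need J\"ager--Kaul's normalized $\theta$, which satisfies a divergence-form inequality with a first-order term; the maximum principle still works.

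\textbf{The missing idea.} Smallness at infinity comes from the initial data through the parabolic monotonicity formula, which is why the paper never solves elliptic Dirichlet problems.
\begin{enumerate}
\item It builds a flow from $u_0$ on all of $\R^n\times(0,\infty)$ by parabolic approximation (Proposition~\ref{prop: existence}, using Lemma~\ref{lem: gradient estimate}(2)).
\item For \emph{any} such flow, $\int_{B_{10}(x_0)}|\nabla u_0|^2\,\dif x\lesssim|x_0|^{-2}$ is small for $|x_0|$ large, because $u_0$ is $0$-homogeneous. Struwe's monotonicity keeps the energy small for $t\in[1/2,2]$.
\item $\epsilon$-regularity and Moser's Harnack inequality for the subsolutions $|\nabla u|^2$ and $|\p_t u|^2$ then give, after rescaling, $|\nabla u|+\sqrt t|\p_t u|\le C/|x|$ on $\{|x|^2\ge At\}$.
\item Integrating $\p_t$ in time gives $|u(x,T)-v(x,T)|\lesssim\sqrt T/|x|$ for two flows from $u_0$. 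The parabolic J\"ager--Kaul principle on $B_R\times[0,T]$ then gives $\theta(u,v)\lesssim T/R^2\to0$.
\end{enumerate}
This is uniqueness among \emph{all} smooth flows into $B_\rho(p)$, not just self-similar ones, which is stronger than what your route aims for. Self-similarity then comes for free, since $u(\lambda\cdot,\lambda^2\cdot)$ is another flow from $u_0$.

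If you want to keep your elliptic construction, you still need this parabolic input. Once you have it, the elliptic approximation is superfluous.
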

When $M$ has non-positive sectional curvature, $M$ itself is a regular ball. In this setting, Deruelle \cite{Deruelle} established the existence of expanding solutions to the harmonic map flow. Theorem \ref{thm: main theorem 1} may therefore be viewed as a generalization of Deruelle’s result to a broader class of target manifolds.

In the sequel, when there is no ambiguity we identify a self-similar flow $u$ with its time-one slice $u(x)=u(x,1)$. By self-similar condition, we have that $u(x,t)=u(x/\sqrt t)$. Substituting this ansatz into the harmonic map flow equation transforms the parabolic problem into the following elliptic problem:
\begin{itemize}
    \item $\Delta u+\nabla_{\frac{x}{2}}u+A_{u}(\nabla u,\nabla u)=0$, in $\R^n$;
    \item $u(r\omega)\rightarrow u_0(\omega)$ for all $\omega\in S^{n-1}$, as $r\rightarrow\infty$.
\end{itemize}
We call the solutions to this equation \textbf{(self)-expander}s. Note that this is just the equation for harmonic maps from $(\R^n,e^{\frac{\vert x\vert^2}{2(n-2)}}\delta)$ to $M$, where $\delta$ is the standard Euclidean metric on $\R^n$. In particular, expanders arise as local critical points of an associated weighted energy functional

$$\mathbf{E}(v)=\int\vert\nabla v\vert^2e^{\frac{\vert x\vert^2}{4}}\dif x.$$

The expanders obtained in Theorem \ref{thm: main theorem 3} are local minimizers of this $\mathbf{E}$ functional. As a result, they satisfy the stationary condition as a harmonic map. When interpreted as solutions to the harmonic map flow, this stationarity corresponds precisely to the parabolic stationary condition.
\begin{rmk}
    The smoothness assumption on the initial map $u_0$ for Theorems \ref{thm: main theorem 2}-\ref{thm: main theorem 1} is not essential for the existence of nontrivial expanding solutions to the harmonic map flow starting from $u_0$. Indeed, if $u_0$ is a general $W^{1,2}_{\loc}$ 0-homogeneous map, we can use the theorem of Bethuel \cite{BethuelApproximation} to approximate $u_0$ by regular $0$-homogeneous maps and argue as Proposition \ref{prop: existence} below. The corresponding existence results then follow from the compactness of energy minimizing harmonic maps. When $u_0$ is merely a $W^{1,2}_{\loc}$ map, in general it is not possible to get the smoothness of the resulting flow near $(\R^n\setminus\lbrace 0\rbrace)\times \lbrace0\rbrace$, nor the boundedness of singular set $\Sigma$ in Theorem \ref{thm: main theorem 3}.
\end{rmk}

\subsection{Related history on existence and uniqueness of harmonic map flows}

The existence and uniqueness for harmonic map flows have been extensively studied. The first remarkable result in this direction is the work of Eells and Sampson \cite{EellsSampson} who proved the existence and uniqueness of global smooth harmonic map flow into a non-positively curved manifold, starting from arbitrary $C^2$ initial data. Jost \cite{Jost} derived a similar result to a regular ball, based on the maximum principle by J\"ager and Kaul \cite{JagerKaul}. For general targets and when $n=2$, Struwe \cite{StruweSurface} and K. C. Chang \cite{Chang} proved the existence of solutions that are smooth away from finitely many points. In all dimensions, Y. M. Chen-Struwe \cite{Chen,ChenStuwe} and Y. M. Chen-F. Lin \cite{ChenLin} constructed partially regular solutions. Note that their results even apply to the $W^{1,2}$ initial data, which is called rough initial data. See also the very recent work for harmonic map flow into CAT(0) spaces by F. Lin, Segatti, Sire and C. Wang \cite{lin2026heatflowharmonicmaps}.

When $n=2$, the uniqueness of harmonic map flow is well-understood, even from rough initial data. If the energy is non-increasing in time, Freire \cite{Freire}, Rupflin \cite{Rupflin} and L. Wang \cite{LuWang} showed the uniqueness. But it turns out the uniqueness cannot hold without the energy non-increasing condition, see the work of Bertsch-Dal Passo-van der Hout \cite{BertschDalPassoRoberta} and Topping \cite{Topping}. For higher dimensions, as already discussed, the uniqueness cannot hold for rough initial data even when the energy is nonincreasing. Coron \cite{Coron}, Bethuel-Coron-Ghidaglia \cite{BethuelCoronGhidaglia} and M. Hong \cite{Hong} have constructed specific examples for non-uniqueness. Uniqueness until first singular time is however true for initial maps that are regular enough or a suitable smallness condition is valid, see the work of F. Lin and C. Wang \cite{LinWangUniqueness1} and T. Huang and C. Wang \cite{HuangWang}.

There are also some works on the self-similar harmonic map flows. Special solutions were found using ODE method in equivariant settings; see H. Fan \cite{Fan} and Gastel \cite{Gastel} for shrinkers (backward self-similar flows), Germain and Rupflin \cite{GermainRupflin} and Germain, Ghoul and Miura \cite{GermainGhoulMiura} for expanders. For general initial 0-homogeneous maps, Deruelle and Lamm \cite{DeruelleLamm} constructed the weak expanding flows if the initial data is homotopically trivial. Smooth flows were also constructed when target is a non-positively curved manifold by Deruelle \cite{Deruelle} or the initial data satisfies the smallness assumption by Z. Geng, C. Wang and J. Yu \cite{GengWangYu}. The space of smooth expanders has also been studied in \cite{Deruelle}. We also refer the reader to the works on expanders of other geometric flows \cite{Jiasverak,Deruelle2,Ding,BernsteinWang1,BernsteinWang2,Khan,Wang}

\subsection{Sketch of proofs}
When $u_0$ is a map to a regular ball, we approach this problem directly, by first establishing the existence of harmonic map flow starting from any $u_0\in W^{1,2}_{\loc}(\R^n,B_{\rho}(p))$. When $u_0$ is restricted to a regular 0-homogeneous map, by using monotonicity formula and small energy regularity, for a solution of harmonic map flow $u$ in this case, we establish the estimate
$$\vert\nabla u(x,t)\vert+\sqrt{t}\vert \p_t u(x,t)\vert\leqslant C\vert x\vert,\text{ when }\vert x\vert^2>>t.$$
The gradient estimate together with the maximum principle by J\"ager-Kaul allow us to establish the uniqueness of harmonic map flow from 0-homogeneous data. As a result, such a flow must be self-similar since $u_{\lambda}=u(\lambda\cdot,\lambda^2\cdot)$ is another solution starting from the same initial data. Existence of harmonic map flow is relatively straightforward as those flows into a regular ball enjoy good a priori estimates.

For initial value lying in a closed hemisphere, we note that any compact subset in an open hemisphere is contained in a regular ball. We perturb the initial value a little bit to a regular ball of $S^m$ and solve the corresponding expanding harmonic map flow by Theorem \ref{thm: main theorem 1}. The resulting expanders are energy minimizers with respect to a metric $g$ on $\R^n$, and are bounded in $W^{1,2}_{\loc}$ using monotonicity formula. Such energy minimizing harmonic maps have a priori good compactness results by Schoen-Uhlenbeck \cite{SchoenUhlenbeckRegularity}, which allows us to take a limit to deduce the desired existence of harmonic map flow. The regularity is then already known by Schoen-Uhlenbeck \cite{SchoenUhlenbeckSphere}.

Finally, from a smooth expander $u$ starting from a stationary regular 0-homogeneous map $u_0$, we construct a harmonic map flow $v\in C^{\infty}(\bar B_1\times (0,T],S^k)$. The smoothness guarantees the nontriviality of $v$. We look at the concatenation $\hat u$ between $u$ and $u_0$
$$\hat u=\frac{\zeta u+(1-\zeta)u_0}{\vert \zeta u+(1-\zeta)u_0\vert}.$$
The asymptotics derived by Deruelle \cite{Deruelle} shows that $u(x,t)$ converges exponentially to $u_0$ on $B_1\setminus B_{1/3}$ as $t\rightarrow0$. Hence, $\hat u$ is not far away from being a harmonic map flow. We solve a harmonic map flow $v$ of the form $v=\hat u+w$. The equation to be satisfied by $w$ is 
$$\p_t w=\Delta w+\mathcal{F}(w), \text{ where }\mathcal{F}(w)=-\mathcal{E}(\hat u)+\vert\nabla\hat u\vert^2w+(\vert\nabla w\vert^2+2\langle\nabla\hat u,\nabla w\rangle)(\hat u+w).$$
Given the asymptotics of $u$, we establish good parabolic estimates of $\mathcal{F}$ on certain Banach space. The existence of $w$, hence $v$, is then an application of the contraction mapping theorem.

\subsection{Organization of paper}
In Section \ref{sec: preliminaries} we recall several properties that will be used in this paper regarding the harmonic maps and their heat flows into a regular ball and the energy minimizing maps. We also establish the existence of harmonic map flow to a regular ball starting from $W^{1,2}_{\loc}$ maps. In Section \ref{sec: proofs} we use them to prove our main results.

\subsection*{Acknowledgment}
The author is highly grateful to Prof. Xin Zhou and Prof. Daniel Stern for their many valuable discussions and insights and constant support. The author also wants to thank Zhihan Wang for many fruitful conversations and pointing out the related results in the area of mean curvature flow. The author was partially supported by NSF DMS-2243149 and NSF DMS-2404992.

\section{Preliminaries}\label{sec: preliminaries}
In this section we recall some basics on harmonic maps and their heat flows. We refer readers to the book by F. Lin- C. Wang \cite{LinWangBook} for a more comprehensive account of these objects. We start with the following monotonicity formulas and small energy regularity theorems.
In this section $\Omega\subset\R^n$ denotes a smooth bounded domain. We may equip $\Omega$ with a metric $g$ that differs from the standard one on $\R^n$ and denote it $(\Omega,g)$. When we simply write $\Omega$ or $\R^n$ without having a metric here, we are referring to the standard Euclidean metric on these spaces. Recall that $\kappa$ is an upper bound of the sectional curvature $\sec_M$.

\begin{prop}\label{prop: monotonocity}
Let $\kappa'=\sup\vert\sec_g\vert\geqslant0$. There exists a small $\epsilon_0=\epsilon_0(m,\kappa,\kappa')>0$ with the following property.
    \begin{enumerate}
        \item (Schoen-Uhlenbeck \cite{SchoenUhlenbeckRegularity}) Let $u\in C^{\infty}((\Omega,g),M)$ be a harmonic map. 
        Then for some $C=C(n,\kappa')>0$, the following quantity
        $$\frac{e^{Cr^2}}{r^{n-2}}\int_{B_r^g}\vert\nabla u\vert^2_g\,\dif\text{vol}_g,$$
        is increasing in $r$. And if $r^{2-n}\int_{B_r}\vert\nabla u\vert^2\leqslant\epsilon_0$, then
        $$\sup_{B_{r/2}^g}r^2\vert\nabla u\vert^2\leqslant\frac{C}{r^{n-2}}\int_{B_r^g}\vert\nabla u\vert^2_g\,\dif\text{vol}_g.$$
        \item (Struwe \cite{StruweHigherDimensions}) Let $u\in C^{\infty}(\Omega\times [-T,0],M)$ be a harmonic map flow with $E_{\Omega}(u(t))\leqslant E_0$ for all $t\in[-T,0]$. Fix $R>0$ and a cut-off function $\varphi\in C_0^{\infty}(B_R)$ with $0\leqslant \varphi\leqslant1$ and $\varphi=1$ on $B_{R/2}$. Then for some $C=C(n)>0$,
        \[
        \begin{aligned}
             r_1^2\int_{\Omega\times\lbrace{-r_1^2\rbrace}}\vert\nabla u\vert^2G\varphi^2\,\dif x 
             \leqslant e^{C(r_2-r_1)}r_2^2\int_{\Omega\times\lbrace{-r_2^2\rbrace}}\vert\nabla u\vert^2G\varphi^2\,\dif x+ CE_0(r_2-r_1).
        \end{aligned}
        \]
        Here $G(x,t)=\frac{1}{(-4\pi t)^{n/2}}e^{\vert x\vert^2/4t},t<0$ is the backward heat kernel. And there exists $\delta=\delta(n,\kappa, E_0)>0$ such that if for some $0<r<\sqrt{T}/2$ we have $r^2\int_{\Omega\times\lbrace-r^2\rbrace}\vert\nabla u\vert^2G\varphi^2\,\dif x\leqslant\epsilon_0$, then
        $$\sup_{B_{\delta r}\times[-\delta^2r^2,0]}(r^2\vert\nabla u\vert^2+r^4\vert\p_tu\vert^2)\leqslant C.$$
        Moreover, for some smaller $\epsilon_0'=\epsilon_0'(m,\kappa,T)\leqslant\epsilon_0$, if 
        \[ r^2\int_{\Omega\times\lbrace-r^2\rbrace}\vert\nabla u\vert^2G\varphi^2\,\dif x\leqslant\epsilon_0', \]
        then
        $$\sup_{B_{r/2}\times[-r^2/4,0]}(r^2\vert\nabla u\vert^2+r^4\vert\p _tu\vert^2)\leqslant C.$$
    \end{enumerate}
\end{prop}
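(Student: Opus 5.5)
This proposition collects classical results, so I would prove it by reduction to the cited works, indicating only the adjustments needed for a non-flat domain metric $g$ and a cut-off $\varphi$.

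For part (1), the monotonicity comes from the stationarity identity. I would test the harmonic map equation against the vector field $X = r\partial_r$ in $g$-geodesic polar coordinates centred at the point. This gives
$$\frac{d}{dr}\Big(r^{2-n}\int_{B_r^g}|\nabla u|_g^2\Big)\ge 2r^{2-n}\int_{\partial B_r^g}|\partial_r u|^2 - C\kappa' r\cdot r^{2-n}\int_{B_r^g}|\nabla u|^2_g.$$
The error term comes from the deviation of $\nabla X$ from the identity. By Hessian comparison, for $r$ below the injectivity and conjugate radius scale this deviation is $O(\kappa' r^2)$. Integrating the factor $e^{Cr^2}$ absorbs the error, which gives the monotonicity.

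The $\epsilon$-regularity part follows Schoen–Uhlenbeck (equivalently, Heinz's trick). Since $u$ is smooth, choose $\sigma\in[0,r/2]$ maximizing $(r/2-\sigma)^2\sup_{B_\sigma}|\nabla u|^2$. Rescale at the point attaining this maximum and use the Bochner inequality
$$\Delta_g|\nabla u|^2\ge -C(\kappa,\kappa')\big(|\nabla u|^4+|\nabla u|^2\big).$$
Applying the mean value inequality for subsolutions, together with the small normalized energy supplied by monotonicity, bounds the rescaled maximum. This yields $\sup_{B_{r/2}}r^2|\nabla u|^2\le Cr^{2-n}\int_{B_r}|\nabla u|^2$.

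For part (2), I would localize Struwe's monotonicity formula. Multiply the equation by $(x\cdot\nabla u + 2t\partial_t u)G\varphi^2$ and integrate. The exact part is the formula with the nonnegative term
$$\int |x\cdot\nabla u+2t\partial_tu|^2\,G\varphi^2/|t|.$$
The cut-off error terms involve $\nabla\varphi$, which is supported in $B_R\setminus B_{R/2}$. On this region $G$ is exponentially small when $|t|\lesssim R^2$, so these terms are bounded by $C E_0$. Integrating in $s=\sqrt{-t}$ between $r_1$ and $r_2$ then gives the stated inequality, and Gronwall's lemma produces the $e^{C(r_2-r_1)}$ factor.

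The parabolic $\epsilon$-regularity is Struwe's theorem (see also Chen–Struwe). Smallness of the Gaussian-weighted energy at scale $r$ propagates, via the localized monotonicity just proved, to smallness of $\rho^2\int|\nabla u|^2G_{(x_0,t_0)}\varphi^2$ for all nearby base points $(x_0,t_0)$ and all scales $\rho\le\delta r$. The cost is an error term $CE_0\delta r$, which forces the choice $\delta=\delta(n,\kappa,E_0)$. Then the parabolic Heinz trick applied with the Bochner inequality
$$(\partial_t-\Delta)|\nabla u|^2\le C|\nabla u|^4$$
gives the gradient bound. The bound on $\partial_t u$ follows from standard parabolic Schauder estimates.

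The final assertion, on the full cylinder $B_{r/2}\times[-r^2/4,0]$, needs the base point to move to distance comparable to $r$. In that regime the $E_0$ error is not small relative to $\epsilon_0$, so I would handle it differently. Shrinking to $\epsilon_0'$ depending on $T$, I would cover the cylinder by finitely many smaller cylinders, with the number controlled by $T$ and $n$, and apply the previous step on each.

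I expect the main obstacle to be precisely this control of the cut-off errors when shifting base points. That is where the dependence of $\delta$ on $E_0$, and of $\epsilon_0'$ on $T$, enters.
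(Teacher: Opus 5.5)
\paragraph{Where you agree with the paper.} For (1), and for the monotonicity inequality and the first $\epsilon$-regularity bound in (2), you follow the route the paper has in mind. The paper gives no argument beyond pointing to Schoen's blow-up trick, to Struwe, and to Proposition 7.1.4 of Lin--Wang for $\partial_t u$. Your Schauder bootstrap for $\partial_t u$ is a fine substitute for the Lin--Wang argument: a bounded right-hand side gives $W^{2,1}_p$ bounds, hence a H\"older right-hand side, hence Schauder estimates on a slightly smaller backward cylinder.

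\paragraph{The gap: the last assertion.} Covering $B_{r/2}\times[-r^2/4,0]$ by small cylinders and ``applying the previous step on each'' presupposes the hypothesis $\rho^2\int|\nabla u(\cdot,t_1-\rho^2)|^2G_{(x_1,t_1)}\varphi^2\le\epsilon_0$ at every new centre. These centres have $|x_1|$ up to $r/2$ and $t_1$ up to $0$, and nothing you wrote produces this hypothesis there. Monotonicity only moves between scales at a fixed centre. A smaller $\epsilon_0'$ does not by itself control a shifted Gaussian either:
\begin{itemize}
\item for $t_1<0$ one has $\sup_x G_{(x_1,t_1)}(x,\tau)/G_{(0,0)}(x,\tau)\ge\exp\big(|x_1|^2/(4|t_1|)\big)$;
\item for $t_1=0$ and $|x_1|=r/2$ the ratio grows like $e^{|x|/(4r)}$.
\end{itemize}
So the far part of the shifted energy is invisible to $\Phi_{(0,0)}(r)$, and the crude bound $r^{2-n}E_0$ for it is not uniform in $r$. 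Your first step has the same issue: the change of centre there is also a heat-kernel comparison, not a consequence of the localized monotonicity. The number of cylinders is also not governed by $T$.

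\paragraph{What is missing.} You need a kernel comparison on the common slice $t=-r^2$, plus a scale-invariant tail bound.
\begin{enumerate}
\item Take $\rho^2=r^2+t_1\in[\tfrac34r^2,r^2]$ and $|x_1|\le r/2$. Then $G_{(x_1,t_1)}(x,-r^2)\le C(n)\,G_{(0,0)}(x,-r^2)\,e^{|x|/(4r)}$, so the region $\{|x|\le Kr\}$ contributes at most $C(n)e^{K/4}\epsilon_0'$.
\item For $\{|x|\ge Kr\}$, apply the monotonicity inequality at centres $(y,0)$, with the cut-off recentred, from the top scale $\sqrt T$ downward. This gives the Morrey-type bound $r^{2-n}\int_{B_r(y)}|\nabla u(\cdot,-r^2)|^2\le C(n,T)E_0$, hence a tail of at most $C(n,T)E_0e^{-cK^2}$.
\item Fix $K=K(n,T,E_0)$ so the tail is at most $\epsilon_0/2$, then take $\epsilon_0'=c(n)e^{-K/4}\epsilon_0$.
\end{enumerate}
Now every point of $B_{r/2}\times[-r^2/4,0]$ is itself an admissible centre for the first step, so no covering is needed. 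This is exactly where $T$, and implicitly $E_0$, enters $\epsilon_0'$.
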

The proof is relatively easy for the smooth harmonic maps and smooth harmonic map flows which only requires a blow-up technique by Schoen, see \cite[Chapter IX]{SchoenYau}, see also Struwe \cite{StruweHigherDimensions} for the flow case. The bound on $\vert \p_t u\vert$ needs a little more work, see \cite[Proposition 7.1.4]{LinWangBook}. The proof in the stationary case is however more involved, see \cite{Evans,Bethuel,ChangWangYang,Liu,ChenStuwe} for related discussions.

\subsection{Harmonic map and their heat flow into regular balls}
The harmonic map and their heat flows enjoy much better behavior when the target is restricted to a regular ball. In this subsection we fix a regular ball $B_{\rho}(p)\subset M^m$, with $\rho<\frac{\pi}{2\sqrt{\kappa}},\kappa=\sup\sec_M\geqslant0$. 

We have the following maximum principle.
\begin{prop}[{J\"ager-Kaul \cite{JagerKaul}}]\label{prop: uniqueness}
    Let $\Omega\subset\R^n$ be a bounded smooth domain. Given two maps $u_1,u_2$, denote by $\theta(u_1,u_2)$ the function 
    $$x\mapsto\frac{q_{\kappa}(\dist(u_1(x),u_2(x)))}{\cos(\sqrt{\kappa}\dist(u_1(x),p))\cos(\sqrt{\kappa}\dist(u_2(x),p))},$$
    where 
    \begin{align*}
        q_{\kappa}(t)=\begin{cases}
            \frac{1-\cos\sqrt{\kappa}t}{\kappa},&\kappa>0;
            \\\frac{t^2}{2},&\kappa=0.
        \end{cases}
    \end{align*}
    Then,
    \begin{enumerate}
        \item If $u_1,u_2\in C^{\infty}((\Omega,g),B_{\rho}(p))\cap L^{\infty}((\bar{\Omega},g),B_{\rho}(p))$ are harmonic maps, then
        $\theta(u_1,u_2)$ satisfies the weak elliptic maximum principle: for any $L>0$,
        $$\sup_{\p\Omega}\theta(u_1,u_2)\leqslant L\implies\sup_{\Omega}\theta(u_1,u_2)\leqslant L.$$
        \item If $u_1,u_2\in C^{\infty}(\Omega\times[0,T],B_{\rho}(p))\cap L^{\infty}(\bar{\Omega}\times[0,T],B_{\rho}(p))$ are harmonic map flows,
        $\theta(u_1,u_2)$ satisfies the weak parabolic maximum principle:
        $$\sup_{(\p\Omega\times[0,T])\cup(\Omega\times\lbrace0\rbrace)}\theta(u_1,u_2)\leqslant L\implies sup_{\Omega\times[0,T]}\theta(u_1,u_2)\leqslant L.$$
        
    \end{enumerate}
\end{prop}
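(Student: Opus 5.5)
The plan is the classical Jäger–Kaul argument: show that $\theta=\theta(u_1,u_2)$ is a subsolution of a linear elliptic (resp. parabolic) operator with no zeroth-order term, and then apply the ordinary weak maximum principle. We work on the product $M\times M$ restricted to $B_\rho(p)\times B_\rho(p)$. There the function
$$\Theta(y_1,y_2)=\frac{q_\kappa(\dist(y_1,y_2))}{\cos(\sqrt\kappa\,\dist(y_1,p))\cos(\sqrt\kappa\,\dist(y_2,p))}$$
is smooth away from the diagonal. It is also smooth across the diagonal, since $q_\kappa(d)$ is smooth in $d^2$. This uses that $B_\rho(p)$ misses the cut locus of $p$ and that $\rho<\pi/(2\sqrt\kappa)$; the latter also implies that any two points of the ball are joined by a unique minimizing geodesic inside the ball, which avoids conjugate points. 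Since $(u_1,u_2)$ is a harmonic map (resp. harmonic map flow) into the product, the chain rule gives
$$\Delta_g\theta=\nabla^2\Theta\big((du_1,du_2),(du_1,du_2)\big),$$
and $\p_t\theta-\Delta\theta=-\nabla^2\Theta(\cdot,\cdot)$ in the parabolic case. The tension terms drop out.

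The key step is Jäger–Kaul's Hessian comparison estimate for $\Theta$. For $\sec_M\le\kappa$ and points in a regular ball, one shows
$$\nabla^2\Theta(V,V)\ \ge\ -\langle \nabla\log(\cos\cos),\ \text{something}\rangle\cdot d\Theta(V),$$
or, more precisely, that $\nabla^2\Theta\ge -\,\omega\otimes d\Theta$ for some bounded one-form $\omega$. This is proved by expanding the Hessian of the quotient. Three ingredients enter:
\begin{itemize}
\item Rauch/Hessian comparison gives $\nabla^2 q_\kappa(\dist)\ge (1-\kappa q_\kappa)\,g_{M\times M}$ restricted to the appropriate Jacobi-field directions.
\item The same comparison gives $\nabla^2\cos(\sqrt\kappa\,\dist(\cdot,p))\le-\kappa\cos(\cdot)\,g$.
\item A Cauchy–Schwarz absorption handles the cross terms $\langle\nabla q,\nabla(\cos\cos)\rangle$; these are absorbed into a term proportional to $\nabla\Theta$.
\end{itemize}
This yields, along the map,
$$\Delta_g\theta+b\cdot\nabla\theta\ge0 \quad\text{(resp. } \p_t\theta-\Delta\theta-b\cdot\nabla\theta\le0),$$
with the drift $b$ built from $\nabla u_i$ and $\nabla\dist(u_i,p)/\cos$.

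The drift $b$ is locally bounded because $u_i$ are smooth on $\Omega$ with images in the open ball. The $L^\infty$ hypothesis keeps the images a positive distance away from $\p B_\rho(p)$, so the cosines stay bounded below. The weak maximum principle for $L=\Delta_g+b\cdot\nabla$ (resp. the heat-type operator) on bounded $\Omega$ then gives $\sup_\Omega\theta\le\sup_{\p\Omega}\theta$, and similarly on the parabolic boundary. If $b$ is only locally bounded near $\p\Omega$, we apply the principle on an exhaustion $\Omega'\Subset\Omega$ and use the continuity implicit in the boundary hypothesis; otherwise we argue by contradiction at an interior maximum of $\theta-\epsilon\psi$, where $\psi$ is a standard barrier.

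The main obstacle is the Hessian estimate for $\Theta$, in particular at the diagonal where $\dist$ is not smooth. I would first handle the off-diagonal region via the second variation formula and index comparison. Near the diagonal I would rely on the smoothness of $q_\kappa\circ\dist$ there; alternatively, since $\theta\ge0$ and the diagonal set is exactly $\{\theta=0\}$, that set cannot host a positive maximum, so the estimate is needed only where $\theta>0$.
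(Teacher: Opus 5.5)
\textbf{The gap is in your last step.} The proposition assumes only $u_i\in C^\infty(\Omega)\cap L^\infty(\bar\Omega)$, and the boundary suprema are essential suprema in the trace sense. The paper's Remark points out that this is weaker than J\"ager--Kaul's original hypothesis of continuity up to $\p\Omega$. That weakening is the entire content of the paper's short proof.

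Your drift only satisfies $|b|\le C(|\nabla u_1|+|\nabla u_2|)$, so it is merely locally bounded. You offer two endings: an exhaustion by $\Omega'\Subset\Omega$ using ``the continuity implicit in the boundary hypothesis'', or a contradiction at an interior maximum of $\theta-\epsilon\psi$. Both need $\limsup_{x\to\p\Omega}\theta\le L$ pointwise, which is exactly the continuity that is not assumed. Without it, the supremum may be approached only at the boundary, where you have neither a maximum point nor a bound on $b$.

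This is not cosmetic. In the uniqueness part of the proof of Theorem~\ref{thm: main theorem 1}, the principle is applied on $B_R\times[0,T]$ to flows that are discontinuous at $(0,0)$, and $\nabla u$ is unbounded near that point. So what you have sketched is the original J\"ager--Kaul statement, not the one asserted here.

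\textbf{The missing idea is the structure of the drift.} What the paper takes from J\"ager--Kaul is that $\theta$ is a subsolution of a divergence-form equation, $\operatorname{div}(e^{-\Phi}\nabla\theta)\ge 0$, with $\Phi$ bounded on all of $\Omega$. In your language, you would need $\omega=-d\phi$ for a bounded function $\phi$ on $B_\rho(p)\times B_\rho(p)$. Then $b=\nabla(\phi\circ(u_1,u_2))$ and $\Phi=\phi\circ(u_1,u_2)$, so the weight $e^{-\Phi}$ is bounded above and below by constants independent of $\nabla u_i$.

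With that structure, you test the inequality with $(\theta-L)_+$, a Stampacchia/De Giorgi type energy argument. This gives $\theta\le L$ using only the trace bound on $\p\Omega$, with no continuity and no control of $\nabla u_i$. The paper asserts part (2) by the same reasoning.

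Your Hessian step, which you leave with an unspecified ``something'', is exactly where this exactness would have to be extracted. A merely bounded, non-exact $\omega$ leaves you with a non-divergence drift of size $|\nabla u_i|$, for which this energy argument is unavailable.

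A smaller point: the cosines are bounded below by $\cos(\sqrt\kappa\rho)>0$ simply because the images lie in $B_\rho(p)$ with $\rho<\pi/(2\sqrt\kappa)$. This does not use the $L^\infty$ hypothesis.
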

The supremum above should be understood as essential essential supremum.
\begin{rmk}
    The original statement in J\"ager-Kaul \cite{JagerKaul} requires the continuity of $u_i$ to the boundary. However, we note that the stated weaker version of their results still holds. For part (1), they showed in their paper that there is a function $\Phi$ uniformly bounded on $\Omega$ such that $\theta(u_1,u_2)$ is a subsolution to the equation
    $$\operatorname{div}(e^{-\Phi}\nabla  f)=0.$$
    Now, $e^{-\Phi}$ is in $L^{\infty}(\Omega)$ due to the boundedness of $\Phi$. Testing the subsolution by function $(\theta(u_1,u_2)-L)_+$ and the De Giorgi-Nash-Moser iteration gives the desired weak maximum principle. The part (2) holds for a similar reason.
\end{rmk}

Finally, a priori, harmonic maps and harmonic map flows into a regular ball have uniform gradient estimates.
\begin{lem}\label{lem: gradient estimate}
    Fix $E_0>0$. We have the followings.
    \begin{enumerate}
        \item (Hildebrandt-Kaul-Widman \cite{HildebrandtStefanWidman}) If $u\in C^{\infty}(\Omega,B_{\rho}(p))$ is a harmonic map with respect to $g$ with $E_{\Omega}(u)\leqslant E_0$, then for each $\Omega'\Subset\Omega$,
        $$\Vert u\Vert_{C^2(\Omega')}\leqslant C(\Omega,\Omega',g,E_0,\rho,M).$$
        \item If $u\in C^{\infty}(\Omega\times[0,T],B_{\rho}(p))$ is a harmonic map flow with $E_{\Omega}(u(\cdot,0))\leqslant E_0$, then for each $\Omega'\Subset\Omega$ and $0<T'\leqslant T$, 
        $$\Vert u\Vert_{C^2(\Omega'\times[T',T])}\leqslant C(\Omega,\Omega',g,E_0,T',\rho,M).$$
    \end{enumerate}
\end{lem}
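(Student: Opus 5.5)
Part (1) is the Hildebrandt--Kaul--Widman estimate, and I would follow their scheme. Part (2) is its parabolic analogue: obtain a H\"older estimate first, then bootstrap. The essential input in both cases is that a regular ball carries a strictly convex function. Let $\psi=1-\cos(\sqrt\kappa\,\dist(\cdot,p))$ (or $\dist^2(\cdot,p)$ when $\kappa=0$). Because $\rho<\pi/(2\sqrt\kappa)$ and $B_\rho(p)$ misses the cut locus, the Hessian comparison theorem gives $\nabla^2\psi\geqslant c(\rho,\kappa)\,g_M$ on $B_\rho(p)$, with $c>0$. Composing with the map, for a harmonic map one gets $\Delta_g(\psi\circ u)\geqslant c|\nabla u|^2$, and for a flow $(\p_t-\Delta)(\psi\circ u)\leqslant -c|\nabla u|^2$. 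Both come straight from the chain rule together with the equation.

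For (1), the image lies in a ball on which all the Jäger--Kaul quantities are controlled, so I would prove a uniform H\"older modulus on $\Omega''$ with $\Omega'\Subset\Omega''\Subset\Omega$, following HKW.
\begin{enumerate}
\item Use the subharmonicity of $\psi\circ u$, and more precisely of the function $\theta(u,u(x_0))$ from Proposition \ref{prop: uniqueness}, which measures the distance to a fixed point, to derive a Caccioppoli inequality and a hole-filling estimate.
\item Deduce a decay estimate for the scaled energy,
$$r^{2-n}\int_{B_r}|\nabla u|^2\leqslant C r^{2\alpha}.$$
HKW obtain this by comparing $u$ with its value at the centre and using the Green function.
\item Once the scaled energy is small, the $\epsilon_0$-regularity in Proposition \ref{prop: monotonocity}(1) gives a gradient bound.
\item Bootstrap by Schauder theory, since $A_u(\nabla u,\nabla u)$ is then bounded, to obtain the $C^2$ bound. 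The dependence on $g$ enters only through the ellipticity and curvature constants.
\end{enumerate}

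For (2), I would argue in the same spirit, localizing in time.
\begin{enumerate}
\item Struwe's monotonicity formula, Proposition \ref{prop: monotonocity}(2), together with the energy bound $E_\Omega(u(t))\leqslant E_0$, reduces regularity on $\Omega'\times[T',T]$ to showing that the Gaussian-weighted scaled energy is smaller than $\epsilon_0'$ at some uniform scale $r$, chosen depending only on the data.
\item To get that smallness, use the parabolic convexity $(\p_t-\Delta)(\psi\circ u)\leqslant -c|\nabla u|^2$. Test it against $G\varphi^2$ and integrate in time. This bounds the space-time weighted energy $\int\!\!\int|\nabla u|^2G\varphi^2$ by a multiple of the oscillation of $\psi\circ u$.
\item To make that oscillation small, replace $\psi$ by its recentred version $\theta(u,q)$ for a point $q$ near $u(x_0,t_0)$. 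A parabolic hole-filling/De Giorgi argument applied to this supersolution, exactly as in the Jäger--Kaul maximum principle remark, gives a uniform parabolic H\"older modulus and hence small oscillation on small cylinders.
\item Choose a time slice where the integrand is no larger than its time average. Combined with the monotonicity formula, this yields $r^2\int|\nabla u|^2G\varphi^2\leqslant\epsilon_0'$ at a uniform $r$.
\item Apply the $\epsilon$-regularity of Proposition \ref{prop: monotonocity}(2) to get bounds on $|\nabla u|$ and $|\p_t u|$, then parabolic Schauder estimates for the $C^2$ bound on $\Omega'\times[T',T]$.
\end{enumerate}
The lower time cutoff $T'>0$ is needed because the initial data is only $W^{1,2}$.

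I expect the main obstacle to be the uniform H\"older/oscillation step (step 3 of (2), and likewise step 2 of (1)). It is the only place where the geometry of the regular ball is used, and the constants must depend only on $\rho,M,E_0$ and not on the particular smooth solution. If the direct De Giorgi approach on $\theta(u,q)$ is awkward, my fallback would be a contradiction and blow-up argument. Suppose the estimate fails along a sequence; rescale at points where the gradient blows up. After rescaling, the maps converge either to a nonconstant harmonic map or to an ancient flow into the closed ball $\overline{B_\rho(p)}$ with bounded gradient. Such a limit must be constant, by the Liouville property: $\psi$ composed with the limit is a bounded subharmonic function (or bounded subsolution) satisfying the strict inequality $\Delta(\psi\circ u)\geqslant c|\nabla u|^2$. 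This contradicts its being nonconstant.
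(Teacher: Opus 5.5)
\paragraph{The gap in your main route.} It sits in the uniform H\"older/oscillation step (Step 3 of (2)), and the mechanism you name cannot supply it. De Giorgi--Nash--Moser applied to one scalar function $\theta(u,q)$ with $(\p_t-\Delta)\theta\leqslant 0$ gives only a one-sided $L^\infty$ bound. Note that this is a subsolution, not a supersolution, and a one-sided bound is all the J\"ager--Kaul remark extracts. It gives no oscillation decay for the vector-valued $u$. A genuine H\"older modulus needs the Hildebrandt--Kaul--Widman Green-function/hole-filling argument, or its parabolic analogue, and you have not supplied it.

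You do not actually need small oscillation. Let $G$ be centred at $(x_0,t_0)$ and set $\Theta(s)=s^2\int|\nabla u(\cdot,t_0-s^2)|^2G\varphi^2\,\dif x$. Test $(\p_t-\Delta)(\psi\circ u)\leqslant -c|\nabla u|^2$ against $G\varphi^2$ over $t\in[t_0-R^2,t_0-\epsilon^2]$. Using $(\p_t+\Delta)G=0$, $\int G=1$ and $0\leqslant\psi\leqslant C(\rho,\kappa)$, you get
\[
\int_{\epsilon}^{R}\Theta(s)\,\frac{\dif s}{s}\leqslant C ,
\]
with $C$ independent of $\epsilon$; the cut-off terms live where $G$ is exponentially small. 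Struwe's almost-monotonicity gives $\Theta(\epsilon)\leqslant e^{CR}\Theta(s)+CE_0R$ for $s\in[\epsilon,R]$. Averaging over $s$ yields $\Theta(\epsilon)\leqslant C/\log(R/\epsilon)+CE_0R$, which is below $\epsilon_0'$ once $R$ is small and $\epsilon\ll R$. The logarithmic length of the range of scales does the work that small oscillation was meant to do, so Steps 3--4 become unnecessary. One more point: $E_0$ here must bound the local energy for all $t$, whereas the hypothesis only controls $t=0$. The local energy inequality, combined with the same convexity bound on $\int\!\!\int|\nabla u|^2\eta^2$, supplies this.

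\paragraph{The fallback.} It breaks at the Liouville step as written. For $n\geqslant 3$ a bounded subharmonic function on $\R^n$ need not be constant; $-(1+|x|^2)^{(2-n)/2}$ is an example. The inequality $\Delta(\psi\circ u)\geqslant c|\nabla u|^2$ alone only gives $R^{2-n}\int_{B_R}|\nabla u|^2\leqslant C$. The blow-up limit is constant only once you add monotonicity, as above. For a static or ancient limit with bounded gradient, testing against the backward heat kernel gives $\int_0^\infty\Theta(s)\,\dif s/s<\infty$, while $\Theta$ is nondecreasing; together these force $\Theta\equiv0$.

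\paragraph{Comparison with the paper.} This noncompact Liouville problem is exactly what the paper's proof sidesteps. It argues by contradiction using the Lin--Wang compactness theory for harmonic map flows. There the only Liouville inputs are that no nonconstant harmonic maps from spheres $S^l$ into $B_\rho(p)$ exist, and on a closed domain integrating $\Delta(\psi\circ u)\geqslant c|\nabla u|^2$ gives constancy at once. Hence the sequence converges strongly to a smooth flow, and $\epsilon$-regularity upgrades this to smooth convergence, contradicting the blow-up of the $C^2$ norms. The paper's route is short but rests on that heavy compactness theory. The repaired direct route (convexity, monotonicity, $\epsilon$-regularity, then Schauder) is self-contained.
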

\begin{proof}
    For the static case, the proof was already established by Hildebrandt-Kaul-Widman \cite{HildebrandtStefanWidman}. Their proof even implies that any weakly harmonic map into a regular ball is smooth. Here we give a sketch of proof for (2) using the compactness theorems by F. Lin and F. Lin-C. Wang \cite{LinGradientEstimates,LinWang}. 
    To apply the theorems therein, we note that since $h=\dist(p,\cdot)^2$ is a strictly convex function on $B_{\rho}(p)$. Hence, there is no non-constant harmonic map from $S^l$ to $B_{\rho}(p)$ for any $l$. 
    
    To prove (2), assume there is a sequence of smooth harmonic map flows $u_k$ from $\Omega\times[0,T]$ to $B_{\rho}(p)$ which has energy bound $E_{\Omega}(u_k(\cdot,0))\leqslant E_0$ but $$\Vert u_k\Vert_{C^2(\Omega'\times[T',T])}\rightarrow\infty.$$ After passing to a subsequence, by \cite{LinWang} $u_k\rightarrow u$ strongly in $W^{1,2}_{\loc}(\Omega\times[0,T],B_{\rho}(p))$ since there is no harmonic $S^2$ in $B_{\rho}(p)$. $u$ is a smooth harmonic map flow since there is no nonconstant harmonic spheres in $B_{\rho}(p)$. Then $u_k$ converges to $u$ locally smoothly using the small energy regularity in Proposition \ref{prop: monotonocity}. This contradicts the fact that $\Vert u_k\Vert_{C^2(\Omega'\times[T',T])}\rightarrow\infty$. (1) can be proved in a similar manner. 
\end{proof}
We are now ready to show the following existence result of harmonic map flow from arbitrary $W^{1,2}_{loc}$ data.
\begin{prop}\label{prop: existence}
    Given a map $u_0\in W^{1,2}_{\loc}(\R^n,B_{\rho}(p))$, there exists $u\in C^{\infty}(\R^n\times(0,\infty),B_{\rho}(p))$ which is the harmonic map flow starting from $u_0$.
\end{prop}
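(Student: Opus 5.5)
We will construct the solution by approximation. We first smooth the data, then solve on an exhaustion by balls, and finally pass to the limit using the a priori estimates of Lemma \ref{lem: gradient estimate}.

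\textbf{Step 1: approximating data.} Since $B_{\rho}(p)$ is a regular ball, it lies in a normal coordinate chart around $p$ and is geodesically convex. Fix $\rho'<\rho$ with the image of $u_0$ essentially contained in $\overline{B_{\rho'}(p)}$; if the image only lies in the open ball, we first radially shrink $u_0$ by composing with the contraction $\exp_p(s\exp_p^{-1}(\cdot))$, $s<1$, and later let $s\to1$. In normal coordinates $\exp_p^{-1}$, mollify $u_0$ to get $u_0^k\in C^\infty(\R^n,B_{\rho'}(p))$ with $u_0^k\to u_0$ in $W^{1,2}_{\loc}$. The mollified map stays in the convex image of the ball in the coordinate ball.

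\textbf{Step 2: solving on bounded domains.} For each $R$ (say $R=k$), we solve the initial-boundary value problem on $B_R\times(0,\infty)$ with initial datum $u_0^k$ and Dirichlet data $u_0^k|_{\p B_R}$. A corner compatibility issue arises here, which we remove by a further harmless modification near $\p B_R$. Classical theory for targets in a regular ball gives a global smooth solution $u_{k}$ with image in $B_{\rho'}(p)$ for all time (Jost, Jäger--Kaul). The image confinement is the parabolic maximum principle for the function $\cos(\sqrt\kappa\,\dist(u,p))^{-1}$, which is a subsolution in the same spirit as Proposition \ref{prop: uniqueness}. We also use the energy inequality: for every $t$,
\[
E_{B_R}(u_k(t))+2\int_0^t\!\!\int_{B_R}|\p_t u_k|^2\leqslant E_{B_R}(u_0^k).
\]
Localizing this with cutoffs, using $\p_t u\cdot\nabla u\varphi\nabla\varphi$ and the $L^\infty$ bound, we obtain for every fixed ball $B_{r}$
\[
\sup_{t\leqslant T} E_{B_r}(u_k(t))+\int_0^T\!\!\int_{B_r}|\p_tu_k|^2\leqslant C\bigl(E_{B_{2r}}(u_0^k)+T r^{n-2}\bigr),
\]
which is uniform in $k$.

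\textbf{Step 3: compactness and interior regularity.} By Lemma \ref{lem: gradient estimate}(2) applied on $B_r\times[T',T]$ with the uniform local energy bound, we get uniform $C^2$ bounds. These upgrade to $C^\ell$ bounds by parabolic bootstrapping. A diagonal subsequence therefore converges in $C^\infty_{\loc}(\R^n\times(0,\infty))$ to a smooth harmonic map flow $u$ with values in $\overline{B_{\rho'}(p)}\subset B_\rho(p)$.

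\textbf{Step 4: initial data (main obstacle).} We must show that $u(t)\to u_0$ strongly in $W^{1,2}_{\loc}$ as $t\to0$; the trace statement then follows. The uniform bound on $\int_0^T\int_{B_r}|\p_t u_k|^2$ gives $\|u_k(t)-u_0^k\|_{L^2(B_r)}\leqslant C\sqrt t$, hence $u(t)\to u_0$ in $L^2_{\loc}$ and weakly in $W^{1,2}_{\loc}$. For strong convergence it suffices to show $\limsup_{t\to0}\int|\nabla u(t)|^2\varphi^2\leqslant\int|\nabla u_0|^2\varphi^2$. The localized energy identity gives
\[
\int|\nabla u_k(t)|^2\varphi^2\leqslant\int|\nabla u_0^k|^2\varphi^2+C\sqrt{t}\,\Bigl(\int_0^t\!\!\int|\p_tu_k|^2\Bigr)^{1/2}\sup_s E_{\operatorname{supp}\varphi}(u_k(s))^{1/2}\cdot\|\nabla\varphi\|_\infty .
\]
This passes to the limit by weak lower semicontinuity on the left and strong convergence $u_0^k\to u_0$ on the right. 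The error term is $O(\sqrt t)$, which yields the $\limsup$ inequality and thus strong convergence. The delicate point is making this local energy inequality uniform in $k$ and $R$. This is where the cutoff computation and the $L^\infty$ confinement (which prevents any bubbling, consistent with there being no harmonic spheres in $B_\rho(p)$) must be checked carefully. If a radial shrinking was used in Step 1, a final diagonal argument in $s\to1$ with the same estimates handles it.
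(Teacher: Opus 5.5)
Your proposal is correct in outline and is essentially the paper's argument: smooth approximation of $u_0$, Dirichlet problems on exhausting balls, Lemma~\ref{lem: gradient estimate}(2) for local smooth compactness, and the localized energy inequality to upgrade to strong $W^{1,2}_{\loc}$ attainment of $u_0$; in place of Bethuel's approximation theorem you mollify in the normal chart, which is legitimate since $\exp_p^{-1}$ maps the regular ball onto a convex Euclidean ball. The one step that still needs an argument is the $k$-uniform local energy bound in Step 2: your cutoff computation leaves $\int_0^T\!\int_{B_{2r}}|\nabla u_k|^2$ on the right, and boundedness of the image alone does not control it; what does is testing $(\p_t-\Delta)h(u_k)\leqslant-\cos(\sqrt{\kappa}\rho)|\nabla u_k|^2$, where $h=q_\kappa(\dist(\cdot,p))$ is strictly convex on the regular ball, against a cutoff (the paper handles this same step by citing the monotonicity formula).
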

\begin{proof}
    We first consider the existence on bounded domains with smooth initial data. This part is already done by Jost \cite{Jost}. Namely, fix a bounded smooth domain $\Omega\subset \R^n$ and $u_0\in C^{\infty}(\bar{\Omega})$, we consider the following problem
    \begin{align*}
        \begin{cases}
            \p_t u=\Delta u+A_u(\nabla u,\nabla u),&\text{ in }\Omega\times[0,\infty);\\
            u=u_0&\text{ on }(\Omega\times\lbrace0\rbrace)\cup(\p\Omega\times[0,\infty)).
        \end{cases}
    \end{align*}
    By Galerkin method, such a problem has a smooth solution at least for a small time interval. Denote $[0,T_{\max})$ the maximal interval on which the solution exists. Using the equation of harmonic map flow, we have the following energy identity
    $$\int_{\Omega}\vert\nabla u_0\vert^2\,\dif x-\int_{\Omega}\vert\nabla u(\cdot,T)\vert^2\,\dif x=2\int_0^T\int_{\Omega}\vert\p_t u\vert^2\,\dif x\dif t,$$
    which gives us the energy bound for $u(\cdot, t)$. By a well-known fact (see \cite{SolonnikovE} for example), if $T_{\max}<\infty$, then we must have 
    $$\lim_{t\rightarrow T_{max}^-}\Vert\nabla u(\cdot,t)\Vert_{L^{\infty}(\Omega)}=\infty.$$
    However, this contradicts Lemma \ref{lem: gradient estimate}(2). We deduce that $T_{\max}=\infty$, i.e. the solution exists on $[0,\infty)$.

    For the problem on $\R^n$, when $u_0$ has regularity merely in $W^{1,2}_{\loc}$, since $B_{\rho}(p)$ is homotopically trivial, we can use the approximation results of Bethuel \cite{BethuelApproximation} to get a sequence of maps $(u_0)_k\in C^{\infty}(\R^n,B_{\rho}(p))$ which converges to $u_0$ in $W^{1,2}_{\loc}$. Take a sequence $R_k\rightarrow\infty$, let $u_k\in C^{\infty}(\bar{B}_{R_k}\times[0,\infty),B_{\rho}(p))$ be the solution to the following problems
    \begin{align*}
        \begin{cases}
            \p_t u_k=\Delta u_k+A_{u_k}(\nabla u_k,\nabla u_k)&\text{ in }B_{R_k}\times[0,\infty);\\
            u_k=(u_0)_k,&\text{ on }(B_{R_k}\times\lbrace0\rbrace)\cup(\p B_{R_k}\times(0,\infty)).
        \end{cases}
    \end{align*}
    By Lemma \ref{lem: gradient estimate}(2), we can find a subsequence $u_k\rightarrow u$ locally smoothly in $\R^n\times(0,\infty)$. For each $R>0$,$E_{B_R}(u_k(\cdot,T))$ is uniformly bounded for each $T>0$ and large $k$ by the monotonicity formula. By dominated convergence theorem, $u_k\rightarrow u$ in $W^{1,2}_{\loc}(\R^n\times[0,\infty))$. Hence, we conclude that $u(x,0)=u_0$ in the sense of trace. Moreover, for each $k$ there holds by the equation of harmonic map flow
    \begin{align*}
        \int\vert\nabla u_k(\cdot,T)\vert^2\varphi^2\,\dif x+\int_0^T\int\vert\p_tu_k\vert^2\varphi^2\,\dif x\dif t\leqslant\int\vert\nabla (u_0)_k\vert^2\varphi^2\,\dif x+4\int_0^T\int\vert\nabla u_k\vert^2\vert\nabla \varphi\vert^2\,\dif x\dif t,
    \end{align*}
    for all $T>0$ and $\varphi\in C^{\infty}_c(\R^n)$. Letting $k\rightarrow\infty$, we deduce that
    \begin{align*}
        \int\vert\nabla u(\cdot,T)\vert^2\varphi^2\,\dif x+\int_0^T\int\vert\p_tu\vert^2\varphi^2\,\dif x\dif t\leqslant\int\vert\nabla u_0\vert^2\varphi^2\,\dif x+4\int_0^T\int\vert\nabla u\vert^2\vert\nabla \varphi\vert^2\,\dif x\dif t.
    \end{align*}
    Letting $T\rightarrow0$, we see that for each $R>0$,
    $$\int_0^1\int_{B_R}\vert\p_t u\vert^2\dif x\dif t\leqslant C(E_{B_{2R}}(u_0),R),$$
    $$\limsup_{t\rightarrow0}E_{B_{R}}(u(t))\leqslant E_{B_R}(u_0).$$
    This tells us $u(\cdot,t)\rightarrow u_0$ strongly in $W^{1,2}_{\loc}$ as $t\rightarrow0$.
\end{proof}

\subsection{Energy minimizing maps into a hemisphere}
Due to the uniqueness result, Proposition \ref{prop: uniqueness}, all harmonic maps into a regular ball belong to a smaller class, the energy minimizing maps.
\begin{defn}
    Let $u\in W^{1,2}((\Omega,g),M)$. We say $u$ is \textbf{(locally) energy minimizing}, if for all $\Omega'\Subset\Omega$ and $v\in W^{1,2}((\Omega,g),M)$ with $u=v$ on $\Omega\setminus\Omega'$, there holds
    $$E_{\Omega'}(u)\leqslant E_{\Omega'}(v).$$
\end{defn}

The energy minimizing maps enjoys better convergence scheme than general $W^{1,2}$ maps and have a priori regularity estimate.
\begin{prop}[Schoen-Uhlenbeck\cite{SchoenUhlenbeckRegularity,SchoenUhlenbeckSphere}]\label{prop: compactness}
    We have the following properties for energy minimizing maps.
    \begin{enumerate}
        \item If $u_k\in W^{1,2}(\Omega,M)$ is a sequence of energy minimizing map with respect to $g$ with uniformly bounded energy, then after passing to a subsequence, $u_k$ converges strongly in $W^{1,2}_{\loc}(\Omega,M)$ to an energy minimizing map $u$;
        \item If $u\in W^{1,2}(\Omega,\overline{S^m_+})$ is energy minimizing with respect to $g$, then there exists a closed subset $\Sigma\subset\bar{\Omega}$ which has Hausdorff dimension at most $n-7$ and is isolated when $n=7$, such that $u$ is smooth on $\Omega\setminus\Sigma$. In particular, $u$ is smooth when $n\leqslant 6$.
    \end{enumerate}
\end{prop}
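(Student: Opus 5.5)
The plan is to follow Schoen--Uhlenbeck's scheme, working throughout in small geodesic balls of $(\Omega,g)$. Under parabolic-free rescalings $x\mapsto x_0+rx$ the metric $g$ converges smoothly to a constant-coefficient metric, which we may take to be Euclidean. The error terms coming from $g$ are $O(\kappa' r^2)$, and these are exactly absorbed by the factor $e^{Cr^2}$ in Proposition \ref{prop: monotonocity}(1). Part (1) is proved first, and part (2) then relies on it through dimension reduction.

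\textbf{Step 1: compactness.} By the energy bound and Rellich, we can pass to a subsequence with $u_k\rightharpoonup u$ weakly in $W^{1,2}_{\loc}$, strongly in $L^2_{\loc}$, and $u(x)\in M$ almost everywhere.

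To show that $u$ is minimizing and that the convergence is strong, fix a ball $B_r\Subset\Omega$ and a competitor $v$ with $v=u$ outside $B_r$. The key tool is an interpolation (extension) lemma, due to Schoen--Uhlenbeck and in the more general form of Luckhaus. Using Fubini we choose a radius $\rho<r$ on which $\int_{\p B_\rho}(\vert\nabla u_k\vert^2+\vert\nabla u\vert^2)$ is bounded and $\int_{\p B_\rho}\vert u_k-u\vert^2\to0$. We then build $w_k\in W^{1,2}(B_\rho\setminus B_{\rho(1-\lambda)},M)$ which interpolates between $v$ (rescaled into $B_{\rho(1-\lambda)}$) and $u_k$ on $\p B_\rho$, with energy at most
\[ C\lambda\rho\int_{\p B_\rho}(\vert\nabla u_k\vert^2+\vert\nabla v\vert^2)+C(\lambda\rho)^{-1}\int_{\p B_\rho}\vert u_k-u\vert^2. \]
In the hemisphere case the projection back onto $M$ is harmless, because the values stay close to $M$ wherever the $L^2$ difference on the sphere is small; Luckhaus' lemma handles this in general.

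Comparing $u_k$ with $w_k$, letting $k\to\infty$ and then $\lambda\to0$, we obtain
\[ \limsup_k E_{B_\rho}(u_k)\leqslant E_{B_\rho}(v). \]
Taking $v=u$ together with lower semicontinuity gives $E(u_k)\to E(u)$, and hence strong convergence. For general $v$ the same inequality gives the minimality of $u$.

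\textbf{Step 2: partial regularity.} Minimizers are stationary, so the monotonicity formula of Proposition \ref{prop: monotonocity}(1) holds for them. We combine it with the $\epsilon$-regularity theorem for minimizers, which Schoen--Uhlenbeck prove via a harmonic replacement and comparison argument giving a Morrey/Campanato decay. This shows that $\Sigma=\{x:\liminf_r r^{2-n}\int_{B_r(x)}\vert\nabla u\vert^2\geqslant\epsilon_0\}$ is closed, that $u$ is smooth off $\Sigma$, and that $\mathcal H^{n-2}(\Sigma)=0$.

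Next we apply Federer's dimension reduction. Rescalings $u(x_0+r\,\cdot)$ have bounded energy by monotonicity, so by Step 1 they converge strongly to tangent maps. These tangent maps are $0$-homogeneous minimizers $\R^n\to\overline{S^m_+}$, and strong convergence together with $\epsilon$-regularity shows that singular points persist in the limit. The standard argument then reduces the estimate $\dim\Sigma\leqslant n-7$, and the discreteness when $n=7$, to a Liouville statement: every $0$-homogeneous minimizer $\R^k\to\overline{S^m_+}$ with $k\leqslant6$ is constant.

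\textbf{Step 3: the Liouville theorem, which is the main obstacle.} For a nonconstant homogeneous minimizer $\phi(x)=\phi(x/\vert x\vert)$ into $\overline{S^m_+}$, I would use the second variation, i.e.\ stability. The test variations are $\phi_t=(\phi+t\,\eta\,e_{m+1})/\vert\cdot\vert$, where $e_{m+1}$ is the pole of the hemisphere; this is the conformal-field direction, which keeps the image in the closed hemisphere for $t\geqslant0$. We take $\eta=\zeta(\vert x\vert)\vert x\vert^{a}$ as a radial cutoff weight.

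Write $\phi_{m+1}\geqslant0$ for the height component. Stability yields an inequality of the form
\[ \int\vert\nabla\phi\vert^2\eta^2\leqslant c\int\vert\nabla\eta\vert^2, \]
weighted by powers of $\phi_{m+1}$. Homogeneity gives $\vert\nabla\phi\vert^2=\vert x\vert^{-2}\vert\nabla_{S}\phi\vert^2$. Optimizing the exponent $a$, in a Hardy-type computation, shows that the inequality is violated precisely when $k\leqslant6$, unless $\nabla\phi\equiv0$.

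The delicate part is handling $\phi_{m+1}$ possibly vanishing, that is, the image touching the equator. I would first treat the case where the image lies in the open hemisphere. For the general case I would then either use the strong maximum principle for $\partial_t\phi_{m+1}-\Delta\phi_{m+1}=\vert\nabla\phi\vert^2\phi_{m+1}\geqslant0$ (in its elliptic form), or reduce to the equatorial case $\phi_{m+1}\equiv0$ and argue in one lower target dimension.
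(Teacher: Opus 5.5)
The paper gives no proof of this proposition; it quotes it from Schoen--Uhlenbeck. Your Steps 1--2 are a reasonable outline of that theory. (One slip: for the interpolation on $\p B_\rho$ to match $v$, you need $v=u$ near $\p B_\rho$, so $\rho$ must lie between the support radius of $v-u$ and $r$, not merely $\rho<r$.) Step 3, however, is the only place where the number $6$ can come from, and it has a genuine gap.

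First, your case split is inverted. For a $0$-homogeneous weakly harmonic $\phi$ into $\overline{S^m_+}$, the function $h=\phi_{m+1}|_{S^{k-1}}\geqslant0$ satisfies $\Delta_{S^{k-1}}h=-\vert\nabla_S\phi\vert^2h\leqslant0$ on a closed manifold. Integrating gives $\vert\nabla_S\phi\vert^2h\equiv0$ and $h$ constant. So the open-hemisphere case is trivial and needs no minimality, and every nonconstant tangent map lies in the equator. That equatorial case cannot be done ``in one lower target dimension'': the image then lies in a full sphere $S^{m-1}$, where nonconstant homogeneous minimizers exist (e.g.\ $x/\vert x\vert$ on $\R^3$). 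The hemisphere enters only through stability in the pole direction, $\int\vert\nabla\phi\vert^2\eta^2\leqslant\int\vert\nabla\eta\vert^2$. Any weighting by powers of $\phi_{m+1}$ is vacuous there, since $\phi_{m+1}\equiv0$.

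Second, a radial $\eta$ cannot produce the contradiction. With $\eta=\eta(\vert x\vert)$, the optimized Hardy computation gives only $\frac{1}{\vert S^{k-1}\vert}\int_{S^{k-1}}\vert\nabla_S\phi\vert^2\leqslant\frac{(k-2)^2}{4}$, an upper bound on the average angular energy. This is violated for $k\leqslant6$ only if you know the average exceeds $(k-2)^2/4$ for every nonconstant harmonic $\phi|_{S^{k-1}}$. That holds for the equator map, where $\vert\nabla_S\phi\vert^2\equiv k-1$ and $k-1>(k-2)^2/4\iff k\leqslant6$. No such lower energy bound is available in general.

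The missing idea is a Simons-type argument with a non-radial test function. Take $\eta=\rho(\vert x\vert)\,\psi(x/\vert x\vert)$ with $\psi=\vert\nabla\omega\vert$ and $\omega=\phi|_{S^{k-1}}$; here $\omega$ is smooth if you take $k$ minimal in the dimension reduction. Integrate the Bochner formula on $S^{k-1}$ (Ricci $=k-2$, target curvature $1$), and use Kato together with $\vert P\vert^2\geqslant\vert\nabla\omega\vert^4/(k-1)$ for $P_{ij}=\langle\p_i\omega,\p_j\omega\rangle$. This gives
\[
\int_{S^{k-1}}\bigl(\vert\nabla\psi\vert^2-\vert\nabla\omega\vert^2\psi^2\bigr)\leqslant-(k-2)\int_{S^{k-1}}\psi^2-\frac{1}{k-1}\int_{S^{k-1}}\psi^4 .
\]
Pole-direction stability, after optimizing $\rho$ via Hardy, requires the left side to be $\geqslant-\frac{(k-2)^2}{4}\int_{S^{k-1}}\psi^2$. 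The two are incompatible for $k\leqslant6$ unless $\psi\equiv0$; at $k=6$ this is because of the $\psi^4$ term.
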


By uniqueness Proposition \ref{prop: uniqueness}, we have that harmonic maps into regular balls are all energy minimizing.

\begin{lem}\label{lem: minimizing property}
    If $u\in C^{\infty}(\Omega,B_{\rho}(p))$ is a harmonic map with respect to $g$, then $u$ is energy minimizing with respect to $g$ among maps in $W^{1,2}\left(\Omega,\bar{B}_{\frac{\pi}{2\sqrt{\kappa}}}(p)\right)$.
\end{lem}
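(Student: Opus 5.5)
Fix $\Omega'\Subset\Omega$ and a competitor $v\in W^{1,2}(\Omega,\bar B_{\frac{\pi}{2\sqrt\kappa}}(p))$ with $v=u$ on $\Omega\setminus\Omega'$. I would compare $u$ with an energy minimizer $w$ on a smooth domain $\Omega''$ with $\Omega'\Subset\Omega''\Subset\Omega$, taken among maps into the closed ball $\bar B_{\frac{\pi}{2\sqrt\kappa}}(p)$ with trace $u|_{\p\Omega''}$. Such a $w$ exists by the direct method, since the constraint set is weakly closed in $W^{1,2}$ and nonempty because it contains $u$. Since $v$ is itself admissible on $\Omega''$, we get $E_{\Omega''}(w)\leqslant E_{\Omega''}(v)$. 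The task is therefore to show $w=u$. Then $E_{\Omega''}(u)\leqslant E_{\Omega''}(v)$, and because $u=v$ on $\Omega''\setminus\Omega'$, this gives $E_{\Omega'}(u)\leqslant E_{\Omega'}(v)$.

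The key step is to place $w$ in the setting of Proposition \ref{prop: uniqueness}. The boundary data $u|_{\p\Omega''}$ is smooth and lies in the compact set $u(\bar\Omega'')\subset B_{\rho'}(p)$ for some $\rho'<\rho$.
\begin{enumerate}
\item First I show that $w$ takes values in $\bar B_{\rho'}(p)$. The function $f=\dist(p,\cdot)$, or better $1-\cos(\sqrt\kappa\,\dist(p,\cdot))$, is convex on the ball of radius $\frac{\pi}{2\sqrt\kappa}$; on the closed boundary sphere it is only degenerately convex. Composing $w$ with the nearest-point retraction $\pi_{\rho'}$ onto $\bar B_{\rho'}(p)$ along radial geodesics does not increase energy: the radial derivative is preserved where $\dist\leqslant\rho'$ and killed where $\dist>\rho'$, and the angular part is contracted because $\sin(\sqrt\kappa s)$ is increasing for $s\leqslant \frac{\pi}{2\sqrt\kappa}$ by Rauch comparison. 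Moreover, equality of energies forces $\pi_{\rho'}\circ w=w$. Since $\pi_{\rho'}\circ w$ has the same trace, minimality of $w$ gives $w(\Omega'')\subset\bar B_{\rho'}(p)\subset B_\rho(p)$.
\item Next, $w$ is a weakly harmonic map into the regular ball $B_\rho(p)$. By the remark in Lemma \ref{lem: gradient estimate}(1), Hildebrandt--Kaul--Widman regularity, $w$ is smooth in $\Omega''$ and bounded.
\item Finally I apply Proposition \ref{prop: uniqueness}(1) on $\Omega''$ to $u$ and $w$. On $\p\Omega''$ the traces agree, so $\theta(u,w)=0$ there in the essential-sup sense; this uses the weak version of the proposition from the remark following it. Taking $L\to 0$ gives $\theta(u,w)\equiv0$, hence $w=u$.
\end{enumerate}

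I expect the main obstacle to be step (1): the competitor may reach the boundary sphere $\p B_{\frac{\pi}{2\sqrt\kappa}}(p)$, where the cut locus can be nonempty (for example on $S^m$ with equator boundary), so the retraction must be checked to be well defined and Lipschitz. In the hemisphere case it is explicit: for $\kappa=1$, write points as $(\cos s)\,p+(\sin s)\,\xi$ with $\xi\perp p$ a unit vector, and set $s\mapsto\min(s,\rho')$. This map is Lipschitz on the closed hemisphere, since $\xi$ becomes singular only at $s=0$, where it does not matter. For a general regular ball, the absence of cut points within distance $\rho$ handles $\pi_{\rho'}$ on $\bar B_\rho(p)$. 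To cover all of $\bar B_{\frac{\pi}{2\sqrt\kappa}}(p)$ I would either assume, as in the application to hemispheres, that the exponential map at $p$ is injective there, or reduce first to competitors in $\bar B_\rho(p)$ by the same retraction argument with $\rho'$ replaced by a radius close to $\rho$.
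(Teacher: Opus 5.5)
Your argument is correct once you commit to your first option, and it takes a different route from the paper's. That option is the assumption that $\bar B_{\frac{\pi}{2\sqrt\kappa}}(p)$ contains no cut point of $p$, so that the radial retraction $\pi_{\rho'}$ is defined and Lipschitz on all of it. This holds for the closed hemisphere, which is the only case the paper uses.

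Drop the second option. A retraction onto $\bar B_{\rho''}(p)$ must also be defined on the whole closed ball, so it meets the same obstruction. In fact, without the hypothesis the statement is false:
\begin{itemize}
\item On the round $\mathbb{R}P^2$ we have $\kappa=1$ and $\bar B_{\pi/2}(p)=\mathbb{R}P^2$. Let $\gamma$ be a closed geodesic through $p$ (length $\pi$), and fix $\pi/4<a<\rho<\pi/2$.
\item Let $f$ and $\tilde f$ be the radial harmonic functions on $\{1<|x|<2\}\subset\R^3$ with boundary values $-a,a$ and $-a,a-\pi$ respectively.
\item Then $\gamma\circ f$ is harmonic into the regular ball $B_\rho(p)$ (on a slightly larger annulus), and $\gamma\circ\tilde f$ has the same trace. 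Their energies are proportional to $(2a)^2>(\pi-2a)^2$, so $\gamma\circ f$ is not minimizing.
\end{itemize}
The paper's proof uses the same structure, but implicitly: it asserts that every competitor $v$ is a $W^{1,2}$-limit of competitors $v_k$ valued in $B_{\rho_k}(p)$. In this example the paper's first step is valid, since each $B_{\rho'}(p)$ with $\rho'<\pi/2$ is regular. So it is exactly this approximation that fails.

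On the routes themselves, the paper:
\begin{itemize}
\item minimizes over maps into a larger ball $B_{\rho'}(p)$ with $\rho'>\rho$;
\item identifies that minimizer with $u$ via Hildebrandt--Kaul--Widman and J\"ager--Kaul;
\item passes to the closed ball by density.
\end{itemize}
You instead minimize directly over the closed ball, so existence is just the direct method. The paper's constraint set is open, so existence there is not automatic. You then trap the minimizer inside $\bar B_{\rho'}(p)\subset B_\rho(p)$ with $\rho'<\rho$ before invoking the same two results. This removes the density step, and no ball larger than $B_\rho(p)$ ever needs to be regular. The paper's proof needs its minimizer to exist and to land in a regular ball before the maximum principle applies; your retraction step is exactly the argument that provides this.

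The one thing to write out is the equality case. With $d_p=\dist(p,\cdot)$, you have pointwise
\[|\nabla(\pi_{\rho'}\circ w)|^2\leqslant|\nabla w|^2-|\nabla(d_p\circ w)|^2\chi_{\{d_p\circ w>\rho'\}}.\]
Minimality then gives $\nabla(d_p\circ w-\rho')_+=0$ a.e. Since its trace on $\p\Omega''$ vanishes, $(d_p\circ w-\rho')_+\equiv0$.
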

\begin{proof}
    The proof is standard; We recall it here. Take a $\rho'\in\left(\rho,\frac{\pi}{2\sqrt{\kappa}}\right)$. For any $\Omega'\Subset\Omega$, we consider the minimizing problem
    $$\min\lbrace E_{(\Omega',g)}(v):v\in W^{1,2}(\Omega',B_{\rho'}(p)),v=u\text{ on }\p\Omega'\rbrace.$$
    It is easy to see such a minimizer $u'$ always exists and is energy minimizing on $(\Omega',g)$. Moreover, the regularity theory in \cite{HildebrandtStefanWidman} applies to yield the smoothness of $u'$. Then by maximum principle Proposition \ref{prop: uniqueness} applied to regular ball $B_{{(\rho'+\rho)}/2}(p)$, we see $u'=u$ on $\Omega'$. This gives the energy minimizing property of $u$.

    This shows that $u$ is energy minimizing in $W^{1,2}(\Omega',B_{\rho'}(p))$ for each $\rho'\in\left(\rho,\frac{\pi}{2\sqrt{\kappa}}\right)$. For a map  $v\in W^{1,2}\left(\Omega,\bar{B}_{\frac{\pi}{2\sqrt{\kappa}}}(p)\right)$ which agrees with $u$ on $\p\Omega$, we can take a sequence of $\rho_k\rightarrow\frac{\pi}{2\sqrt{\kappa}}$ and $v_k\in W^{1,2}(\Omega',B_{\rho_k}(p))$ which agrees with $u$ on $\p\Omega'$ and $v_k\rightarrow v$ in $W^{1,2}(\Omega,\bar{B}_{\frac{\pi}{2\sqrt{\kappa}}}(p))$. Taking the limit in 
    $$E_{(\Omega',g)}(u)\leqslant E_{(\Omega',g)}(v_k).$$
    We see $u$ is indeed energy minimizing in a larger space $W^{1,2}\left(\Omega,\bar{B}_{\frac{\pi}{2\sqrt{\kappa}}}(p)\right)$.
\end{proof}

\section{Existence of non-trivial expanders}\label{sec: proofs}

We are ready to construct non-trivial expanders of harmonic map flow starting from a given regular 0-homogeneous maps.
\begin{proof}[Proof of Theorem \ref{thm: main theorem 1}]
    Let $u_0:\R^n\rightarrow B_{\rho}(p)$ be a regular 0-homogeneous map. By Proposition \ref{prop: existence}, we see that there exists at least one harmonic map flow starting from $u_0$. 
    \begin{claim}
        Let $u\in C^{\infty}(\R^n\times(0,\infty),B_{\rho}(p))$ be a harmonic map flow starting from $u_0$. Let $\Lambda=\sup_{\p B_1}\vert\nabla u_0\vert$. Then there exists $A=A(n,\kappa,\Lambda)$ such that
        $$\vert\nabla u(x,t)\vert+\sqrt{t}\vert\p_t u(x,t)\vert\leqslant\frac{C(n,\kappa,\Lambda)}{\vert x\vert}\text{ on }\lbrace(x,t):\vert x\vert^2\geqslant A t\rbrace.$$
    \end{claim}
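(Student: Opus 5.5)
We prove the estimate by a rescaling argument combined with the parabolic small energy regularity of Proposition \ref{prop: monotonocity}(2). Fix $(x_0,t_0)$ with $\vert x_0\vert^2\geqslant At_0$ and set $r=\vert x_0\vert$. The rescaled flow $u_r(y,s)=u(x_0+ry,r^2s)$ starts from $u_0(x_0+r\,\cdot)$. Since $u_0$ is $0$-homogeneous, on $B_{1/2}(0)$ this initial map is smooth with $\vert\nabla u_0(x_0+ry)\vert\,r\leqslant 2\Lambda$. Hence it suffices to show the following: for the rescaled flow, $\vert\nabla u_r\vert+\sqrt{s}\,\vert\p_s u_r\vert\leqslant C$ at $(0,s_0)$ with $s_0=t_0/r^2\leqslant 1/A$. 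Undoing the scaling then gives $\vert\nabla u(x_0,t_0)\vert\leqslant C/r$ and $\vert\p_t u\vert\leqslant C/r^2\leqslant C/(\sqrt{t_0}\,r)$. The last inequality uses $\sqrt{t_0}\leqslant r/\sqrt A$, and this is exactly the claimed bound.

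\textbf{Step 1: small Gaussian energy at short times.} Apply the localized monotonicity formula in Proposition \ref{prop: monotonocity}(2) to $u_r$ with base point $(0,s_1)$, where $s_1=2/A$ and $A$ is large, and with a cutoff $\varphi$ supported in $B_{1/4}$. Letting the lower time tend to $0$, the Gaussian quantity at scale $\tau$ is bounded by
\[
e^{C\sqrt{s_1}}\,s_1\int\vert\nabla u_0(x_0+r\,\cdot)\vert^2 G\varphi^2\,\dif y \;+\; CE_0\sqrt{s_1}.
\]
This uses the strong $W^{1,2}_{\loc}$ convergence $u(\cdot,t)\to u_0$ from Proposition \ref{prop: existence}, which is needed to pass to the limit $t\to0$. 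The first term is at most $C\Lambda^2 s_1\to0$, because the rescaled initial gradient is bounded by $2\Lambda$ on the support of $\varphi$ and $\int G=1$. For $E_0$ we need a uniform local energy bound for $u_r$ on $B_{1/4}\times[0,s_1]$. This follows from the localized energy inequality derived at the end of the proof of Proposition \ref{prop: existence}: $E_{B_{1/2}}(u_r(s))\lesssim E_{B_1}(u_r(0))+s\cdot(\text{energy})$, and a Gronwall or iteration argument over slightly larger balls keeps everything bounded by $C(n,\Lambda)$. Choosing $A$ large then makes the Gaussian energy at every scale $\tau\leqslant\sqrt{s_1}$ smaller than $\epsilon_0'$, uniformly over base points in $B_{1/8}\times[0,s_1]$.

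\textbf{Step 2: apply small energy regularity and conclude.} The $\epsilon_0'$ part of Proposition \ref{prop: monotonocity}(2) now gives $\vert\nabla u_r\vert^2\leqslant C/\tau^2$ and $\vert\p_s u_r\vert\leqslant C/\tau^2$ on parabolic cylinders of size $\tau\sim\sqrt{s_1}$ around $(0,s_1)$, covering times $s\leqslant s_1$. This yields a bound of order $\sqrt A$, which is not yet uniform. To remove the dependence on the scale, we use that the initial data is smooth with bounded gradient near $0$. Smallness holds at every scale down to $0$, so we can apply the estimate at the scale $\tau=\sqrt{s_0}$ to control $\sqrt{s_0}\,\vert\p_s u_r\vert$. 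For the gradient we need a scale-independent bound, and the small-energy estimate can be upgraded to one. Small energy gives $\epsilon$-regularity in the form of a Bochner inequality $(\p_s-\Delta)\vert\nabla u_r\vert^2\leqslant C\vert\nabla u_r\vert^4$. Combined with smallness of the Gaussian energy and bounded initial gradient, this yields $\vert\nabla u_r\vert\leqslant C(\Lambda)$ up to $s=0$. We then bound $\vert\p_s u_r\vert\sqrt s$ by interior parabolic Schauder estimates on cylinders of size $\sqrt s$.

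The main obstacle is Step 1's uniform local energy bound together with the up-to-$t=0$ control. The monotonicity formula must be applied to a flow on $\R^n$ that is only known to be smooth for $t>0$. The fix is to apply it to the approximating flows $u_k$ from Proposition \ref{prop: existence} and pass to the limit, and to restrict the base points to the region $\vert x\vert^2\geqslant At$, where the homogeneity of $u_0$ gives the scale-invariant smallness.
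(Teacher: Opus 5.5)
Your rescaling at scale $r=\vert x_0\vert$ is a legitimate reformulation. The claim is equivalent to $\vert\nabla u_r\vert+\sqrt{s}\,\vert\p_s u_r\vert\leqslant C(n,\kappa,\Lambda)$ near $(0,s_0)$, uniformly in $s_0\leqslant 1/A$. But Step 2 never proves this uniformity, and it is the whole content of the claim.

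The threshold form of $\epsilon$-regularity at base time $s$ can only be applied at scales $\tau\lesssim\sqrt{s}$. So ``small Gaussian energy at every scale'' gives only $\vert\nabla u_r(0,s)\vert\lesssim s^{-1/2}$ and $\vert\p_s u_r(0,s)\vert\lesssim s^{-1}$. In the original variables these are just the scaling bounds $\vert\nabla u\vert\lesssim t^{-1/2}$ and $\vert\p_t u\vert\lesssim t^{-1}$. In particular, the scale $\tau=\sqrt{s_0}$ does \emph{not} control $\sqrt{s_0}\,\vert\p_s u_r\vert$; it only gives $\sqrt{s_0}\,\vert\p_s u_r\vert\lesssim s_0^{-1/2}$. (Also, when you undo the scaling, $\sqrt{s_0}\,\vert\p_s u_r\vert\leqslant C$ yields $\vert\p_t u\vert\leqslant C/(r\sqrt{t_0})$ directly, not $C/r^2$.)

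The decisive sentence is that Bochner, small Gaussian energy and bounded initial gradient yield $\vert\nabla u_r\vert\leqslant C(\Lambda)$ up to $s=0$. It has no mechanism behind it, and the natural ones fail:
\begin{itemize}
\item A maximum-principle argument for $e=\vert\nabla u_r\vert^2$ needs $e$ continuous up to $s=0$ near $y=0$, i.e.\ smoothness of $u$ up to $t=0$ away from the origin. The paper deduces that \emph{from} this claim; you only have $W^{1,2}_{\loc}$ convergence.
\item With only $e\lesssim s^{-1}$ available, the potential in $(\p_s-\Delta)e\leqslant Ce\cdot e$ has critical size $s^{-1}$. Any Gronwall or Duhamel argument starting from $s=0$ then diverges.
\end{itemize}

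The missing idea, which the paper uses, is the \emph{linear} form of $\epsilon$-regularity fed by a \emph{quantitative} energy bound. Once the threshold estimate gives $\tau^2e\leqslant C$ on the cylinder $Q_\tau$ of size $\tau\sim\sqrt{s_0}$ around $(0,s_0)$, the Bochner inequality becomes linear: $(\p_s-\Delta)e\leqslant C\tau^{-2}e$. Moser's mean-value inequality then gives $e(0,s_0)\leqslant C\tau^{-n-2}\int_{Q_\tau}e$. The same works for $\vert\p_s u_r\vert^2$, combined with $\int\vert\p_s u_r\vert^2\lesssim\tau^{-2}\int\vert\nabla u_r\vert^2$ on slightly larger cylinders; this is how the paper uses \cite[Lemma 2.3]{ChenLiWang}.

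For this to give an $O(1)$ bound you need $\int_{B_\tau}e(\cdot,s)\lesssim\Lambda^2\tau^n$ for $s\sim s_0$. That is, the Gaussian energy at scale $\sqrt{s_0}$ must be $O(\Lambda^2 s_0)$, inherited from the bounded initial gradient, and not merely below $\epsilon_0'$. Your Step 1 bound $C\Lambda^2s+CE_0\sqrt{s}$ is too weak: it leads only to $e(0,s_0)\lesssim\Lambda^2+E_0s_0^{-1/2}$.

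There are two ways to get the needed bound:
\begin{itemize}
\item Use that the error terms in Struwe's localized formula live on $\mathrm{supp}\,\nabla\varphi$, where $G$ is exponentially small, so they are $\lesssim E_0e^{-c/s}$.
\item Work as the paper does, at $t=1$ with $\vert x_0\vert$ large. There the energy on unit balls near $x_0$ is $O(\Lambda^2\vert x_0\vert^{-2})$, and the same mean-value step turns it directly into $\vert\nabla u\vert^2+\vert\p_tu\vert^2\leqslant C\Lambda^2\vert x_0\vert^{-2}$ at $(x_0,1)$.
\end{itemize}

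Finally, do not route the argument through the approximating flows $u_k$ of Proposition~\ref{prop: existence}. The claim must hold for \emph{every} flow starting from $u_0$, because it is what proves uniqueness. Apply the monotonicity formula to $u$ itself on $[\delta,t]$ and let $\delta\to0$ using the $W^{1,2}_{\loc}$ convergence, as you already do in Step 1.
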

    \begin{proof}[Proof of Claim]
        Rescaling, we only need to prove the claim for $t=1$.
        
        Take an $x_0\in\R^n\setminus\lbrace0\rbrace$ that is large. By the definition of $0$-homogeneous, in particular $u_0(x)=u_0(x/\vert x\vert)$, we have $$\vert\nabla u_0(x)\vert\leqslant\Lambda\vert x\vert^{-1},\text{ for all } x\ne0.$$
        We can estimate
        $$100\int_{B_{10}(x_0)}\vert\nabla u_0\vert^2e^{-\frac{\vert x-x_0\vert^2}{400}}\dif x\leqslant 100\Lambda \cdot10^{n-2}\int_{B_{10}(x_0)}\frac{1}{\vert x\vert^2}e^{-\frac{\vert x-x_0\vert^2}{100}}\dif x\leqslant \frac{C\Lambda}{\vert x_0\vert^2}.$$
        $$\int_{B_{10}(x_0)}\vert\nabla u_0\vert^2\dif x\leqslant\frac{C\Lambda}{\vert x_0\vert^2}$$
        By monotonicity formula Proposition \ref{prop: monotonocity}, we see 
        $$\int_{B_{9}(x_0)\times\lbrace t\rbrace}\vert\nabla u\vert^2\dif x\leqslant\frac{C\Lambda}{\vert x_0\vert^2}\text{ for all }\frac{1}{2}\leqslant t\leqslant2.$$
        Hence, if $\vert x_0\vert$ is sufficiently large, then we can apply the small energy regularity at $(x_0,10)$, to get the gradient estimate
        $$\sup_{B_9(x_0)\times[1/2,2]}(\vert\nabla u\vert^2+\vert\p _t u\vert^2)\leqslant C.$$
        Recall that we have the Bochner type inequality (see \cite[Chapter 7]{LinWangBook}
        $$(\p_t-\Delta)\vert\nabla u\vert^2\leqslant C\vert\nabla u\vert^4\leqslant C\vert\nabla u\vert^2.$$
        $$(\p_t-\Delta)\vert\p_t u\vert^2\leqslant C\vert\nabla u\vert^2\vert\p_t u\vert^2\leqslant C\vert\p_t u\vert^2\text{ on }B_{9}(x_0)\times[1/2,2].$$
        By parabolic Moser's Harnack inequality, together with \cite[Lemma 2.3]{ChenLiWang}, we get
        $$\sup_{B_{1/2}(x_0)\times[3/4,5/4]}\vert\nabla u\vert^2\leqslant C\int_{B_{1/2}(x_0)\times[3/4,5/4]}\vert\nabla u\vert^2\leqslant\frac{C\Lambda}{\vert x_0\vert^2}.$$
        \begin{align*}
            \sup_{B_{1/4}(x_0)\times[15/16,17/16]}\vert\p_t u\vert^2&\leqslant C\int_{B_{1/3}(x_0)\times[8/9,10/9]}\vert\p_t u\vert^2\\&\leqslant C\int_{B_{1/2}(x_0)\times[3/4,5/4]}\vert\nabla u\vert^2\leqslant\frac{C\Lambda}{\vert x_0\vert^2}.
        \end{align*}
        This proves the claim.
    \end{proof}
    The higher ordergradient estimates follows in a similar way using a bootstrap argument. This gives the smoothness of $u$ near $(\R^n\setminus\lbrace0\rbrace)\times\lbrace0\rbrace
    $.
    Hence, for any two harmonic map flows $u,v\in C^{\infty}(\R^n\times(0,\infty))$ starting from $u_0$, for each fixed $T$ we have
    \begin{align*}
        \vert u(x,T)-v(x,T)\vert&\leqslant \vert u(x,T)-u_0(x)\vert+\vert v(x,T)-v_0(x)\vert\\&\leqslant\int_0^T(\vert\p_t u\vert+\vert\p_t v\vert)\dif t\leqslant \frac{CT}{\vert x\vert}, \text{ for }\vert x\vert> A\sqrt T.
    \end{align*}
    By maximum principle Proposition \ref{prop: uniqueness} applied to $\theta(u,v)$ on $B_R\times[0,T]$ for $R>A \sqrt T$, we see
    $$\theta(u,v)\leqslant CTR^{-2}\text{ on } B_R\times[0,T]\text{ for } R\text{ sufficiently large}.$$
    Letting $R\rightarrow\infty$, we get that $u=v$ on $\R^n\times[0,T]$. Since $T$ is arbitrary, we know that $u=v$. This proves the uniqueness part of the theorem.

    Finally, since $u$ and $u_{\lambda}=u(\lambda\cdot,\lambda^2\cdot)$ are both harmonic map flows starting from $u_0$, we see $u=u_{\lambda}$. Hence $u$ is self-similar, and $u(x)=u(x,1)$ is an expander to harmonic map flow.
\end{proof}

\begin{proof}[Proof of Theorem \ref{thm: main theorem 2} and Theorem \ref{thm: main theorem 3}]
    Denote the coordinate on $\overline{S^m_+}$ as $(y',y_{m+1})$ where $y'\in\R^m,y_{m+1}\geqslant0$ and $\vert y'\vert^2+y_{m+1}^2=1$. We henceforth write $u_0=(u_0',(u_0)_{m+1})$. For $\sigma>0$ that is small, let us consider the perturbation of $u_0$ defined by $$u_0^{\sigma}=\frac{(u_0',(u_0)_{m+1}+\sigma)}{\vert (u_0',(u_0)_{m+1}+\sigma)\vert}.$$
    Recall that the standard sphere has sectional curvature 1. Let $p$ denote the north pole of $S^m$, then $B_{\pi/2}(p)=S^m_+$ and $\lbrace (y',y_{m+1})\in S^m:y_{m+1}\geqslant\sigma/\sqrt{1+\sigma^2}\rbrace$ is inside a regular ball for each $\sigma\in(0,1)$. In particular, $u_0^{\sigma}$ is a regular 0-homogeneous map for each $\sigma$ small. By Theorem \ref{thm: main theorem 1}, there exists an expander $u^{\sigma}\in C^{\infty}(\R^n,S^m_+)$, which viewed as a harmonic map flow is the unique harmonic map flow starting from $u_0^{\sigma}$ with $\vert\nabla u^{\sigma}\vert\leqslant C$ outside $B_{A}$ for constants $C,A>0$ independent of $\sigma$. Moreover, $u^{\sigma}$ takes the value in a regular ball of $S^m$.

    As discussed in introduction that the expander $u^{\sigma}$ is a harmonic map from $(\R^n,g)$ to $S^m_+$ for $g=e^{\frac{\vert x\vert^2}{2(n-2)}}\delta$. By Lemma \ref{lem: minimizing property}, for each smooth bounded $\Omega\subset\R^n$, $u^{\sigma}$ is energy minimizing among maps in $W^{1,2}(\Omega,\overline{S^m_+})$ with the same boundary values with respect to $g$. For any other map $v=(v',v_{m+1})\in W^{1,2}(\Omega,S^m)$ with $v=u^{\sigma}$ on $\p\Omega$, we consider the map $\tilde{v}=(v',\vert v_{m+1}\vert)$ which takes value in $\overline{S^m_+}$ and $\tilde{v}=u^{\sigma}$ on $\p\Omega$. We see that
    $$E_{(\Omega,g)}(u^{\sigma})\leqslant E_{(\Omega,g)}(\tilde{v})\leqslant E_{(\Omega,g)}(v).$$
    As a result, $u^{\sigma}$ is energy minimizing in $W^{1,2}(\Omega,S^m)$ with respect to $g$. 

    In order to obtain the uniform local energy bound, we first note that the non-negativity of $u_{m+1}^{\sigma}$ and the equation of $(m+1)$-th component $(u^{\sigma})_{m+1}$ says
    $$\Delta_g(u^{\sigma})_{m+1}=-\vert\nabla^gu^{\sigma}\vert^2(u^{\sigma})_{m+1}\leqslant0.$$
    Hence, $(u^{\sigma})_{m+1}$ is superharmonic. Since $(u^{\sigma})_{m+1}$ is asymptotic to $u^{\sigma}_0$ at $\infty$, the positivity of $(u^{\sigma}_0)_{m+1}$ implies the positivity of $(u^{\sigma})_{m+1}\geqslant\sigma/\sqrt{1+\sigma^2}$. In this situation, we have a positive solution to the equation
    $$\Delta_gf+\vert\nabla^g u^{\sigma}\vert^2f=0.$$
    Classical PDE theory implies that the first eigenvalue of $\Delta_g+\vert\nabla^gu^{\sigma}\vert^2$ is non-negative in any domain. In particular,
    $$\int(\vert\nabla\varphi\vert^2-\vert\nabla u\vert^2\varphi^2)e^{\frac{\vert x\vert^2}{4}}\dif x\geqslant0,\text{ for all }\varphi\in C_c^{\infty}(\R^n).$$
    Fixing $R>0$ and $\varphi$ to be standard cutoff function $\varphi\in C_c^{\infty}(B_{2R})$, such that $\varphi=1$ on $B_R$, we see there is uniform local energy bound for $u^{\sigma}$:
    $$\int_{B_R}\vert\nabla u^{\sigma}\vert^2\dif x\leqslant C(n,R).$$
    
    Hence, we can pass to a subsequence and use Proposition \ref{prop: compactness}(1) to get that $u^{\sigma}$ converges to an expander $u\in W^{1,2}_{\loc}(\R^n,\overline{S^m_+})$ strongly in $W^{1,2}_{\loc}$. $u$ is energy minimizing with respect to $g$. Then by Proposition \ref{prop: compactness}(2), $u$ is smooth when $3\leqslant n\leqslant6$ and smooth away from an $(n-7)$-dimensional closed set when $n\geqslant 7$. The uniform gradient bounds for $u^{\sigma}$ outside $B_A$ tell us $\Sigma$ is bounded. 
    
    To see $u(x,t)=u(x/\sqrt{t})$ has initial value $u_0$, we first appeal to the energy inequality of $u^{\sigma}$
    \begin{align*}
        \int\vert\nabla u^{\sigma}(\cdot,T)\vert^2\varphi^2\,\dif x+\int_0^T\int\vert\p_tu^{\sigma}\vert^2\varphi^2\,\dif x\dif t\leqslant\int\vert\nabla u_0^{\sigma}\vert^2\varphi^2\,\dif x+4\int_0^T\int\vert\nabla u^{\sigma}\vert^2\vert\nabla \varphi\vert^2\,\dif x\dif t,
    \end{align*}
    for all $T>0$ and $\varphi\in C^{\infty}_c(\R^n)$, passing to limit we know that the same inequality also holds for $u$. This first establishes local uniform bound of $\Vert \p_tu^{\sigma}\Vert_{L^2([0,T]\times B_R)}$. Hence, the Aubin-Lions lemma gives the strong convergence of $u^{\sigma}$ to $u$ in $C([0,T],L^2(B_R))$ for each $R>0$. From this we see that $u(\cdot,t)\rightarrow u_0$ as $t\rightarrow0$ strongly in $L^2_{\text{loc}}$. Then the energy inequality above upgrades this to the strong convergence in $W^{1,2}_{\text{loc}}$. We have established that $u$ is the harmonic map flow starting from $u_0$. 
    
    Next, we show that $u$ must differ from $u_0$. When $3\leqslant n\leqslant 6$, this is already seen from the smoothness of $u$. For $n\geqslant 7$, we note that if $u_0$ is not energy minimizing for standard Euclidean metric, then it also cannot be minimizing with respect to $g=e^{\frac{\vert x\vert^2}{2(n-2)}}\delta$ as follows. First by scaling invariance, $u_0$ is not energy minimizing on $B_1$. For any $v\in W^{1,2}(B_1,\overline{S^m_+})$ that agrees with $u_0$ on $\p B_1$ and $r>0$, we know $v^r(y)=v(r^{-1}y)$ is a map in $W^{1,2}(B_r,\overline{S^m_+})$ which agrees with $u_0$ on $\p B_r$. If $u_0$ is energy minimizing with respect to $g$, we deduce that
    $$\int_{B_r}\vert\nabla u_0\vert^2e^{\frac{\vert y\vert^2}{4}}\dif y\leqslant\int_{B_r}\vert\nabla  v^r\vert^2e^{\frac{\vert y\vert^2}{4}}\dif y=r^{-2}\int_{B_r}\vert\nabla v(r^{-1}y)\vert^2e^{\frac{\vert y\vert^2}{4}}\dif y.$$
    Let us consider the change of variable $y=r x$. Since $u_0$ is 0-homogeneous, we see $\nabla u_0(rx)=r^{-1}\nabla u_0(x)$. We get
    $$\int_{B_1}\vert\nabla u_0\vert^2 e^{\frac{r^2\vert x\vert^2}{4}}\dif x\leqslant\int_{B_1}\vert\nabla v\vert^2e^{\frac{r^2\vert x\vert^2}{4}}\dif x.$$Letting $r\rightarrow0$, we see $u_0$ is energy minimizing with respect to the standard Euclidean metric. This contradicts our assumption and tells us $u$ cannot be identically equal to $u_0$.
\end{proof}

\section{Non-uniqueness of harmonic map flow in bounded domains}
Finally, we establish the following concatenation result, which together with Theorem \ref{thm: main theorem 2} implies Corollary \ref{thm: non-uniqueness on bounded domain}.

\begin{prop}\label{prop: concatenation}
    Let $u_0:\R^n\rightarrow S^k$ be a non-minimizing regular 0-homogeneous stationary harmonic map. Suppose there exists an expander $u\in C^{\infty}(\R^n,S^k)$ which can be viewed as a harmonic map flow starting from $u_0$.

    Then the following equation has a non-trivial solution $v\in C^{\infty}(\bar B_1\times(0,T],S^k)$ for some $T>0$.
    \begin{align*}
        \begin{cases}
            \p_t v=\Delta v+\vert \nabla v\vert^2v&\text{ in } B_1\times(0,T];\\
            v(\cdot,t)\rightarrow u_0&\text{ strongly in } W^{1,2}(B_1)\text{ as }t\rightarrow 0;\\
            v(\cdot,t)=u_0&\text{ on }\p B_1\times[0,T].
        \end{cases}
    \end{align*}
\end{prop}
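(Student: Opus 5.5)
We construct $v$ as a small correction of a concatenation of the expander $u(x,t)=u(x/\sqrt t)$ near the origin with the static solution $u_0$ near $\p B_1$, as outlined in the sketch of proofs. Fix a cutoff $\zeta\in C_c^\infty(B_{2/3})$ with $\zeta=1$ on $B_{1/3}$, and set
$$\hat u=\frac{\zeta u+(1-\zeta)u_0}{\vert\zeta u+(1-\zeta)u_0\vert}.$$
For this to be well defined we need $u(\cdot,t)$ to be uniformly close to $u_0$ on the annulus $B_1\setminus B_{1/3}$ for small $t$. This follows from the Claim in the proof of Theorem \ref{thm: main theorem 1}: it gives $|\p_t u|\leqslant C t^{-1/2}|x|^{-1}$ when $|x|^2\geqslant At$, so $|u(x,t)-u_0(x)|\leqslant C\sqrt t$ there. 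We would improve this to the exponential rate $O(e^{-c|x|^2/t})$ with all derivatives, using the asymptotic expansion of Deruelle \cite{Deruelle}. The key point is that $u_0$ is harmonic, so the expander equation $\Delta u+\frac{x}{2}\cdot\nabla u+\vert\nabla u\vert^2u=0$ linearized around $u_0$ at infinity has a drift term that forces Gaussian decay of $u-u_0$ once the polynomial term vanishes. With this, $\hat u=u$ on $B_{1/3}$, $\hat u=u_0$ near $\p B_1$, and the error
$$\mathcal E(\hat u)=\p_t\hat u-\Delta\hat u-\vert\nabla\hat u\vert^2\hat u$$
is supported in $B_{2/3}\setminus B_{1/3}$, with every $C^k$ norm bounded by $Ce^{-c/t}$.

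Next we write $v=\hat u+w$ with $w=0$ on $\p B_1$ and $w(\cdot,0)=0$. Since $|v|=1$ is not automatic, we solve the harmonic map flow equation into $\R^{k+1}$ in the form that preserves the sphere (the equation for $w$ in the sketch). Here
$$\mathcal F(w)=-\mathcal E(\hat u)+\vert\nabla\hat u\vert^2w+(\vert\nabla w\vert^2+2\langle\nabla\hat u,\nabla w\rangle)(\hat u+w).$$
Once $v$ exists, we would verify $|v|\equiv1$ by a maximum-principle argument for $|v|^2-1$, which satisfies a linear parabolic equation with zero initial and boundary data.

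The main obstacle is that $\nabla\hat u$ is singular: $|\nabla\hat u|\sim|x|^{-1}$ and $\sim t^{-1/2}$ near the origin. So the linear terms $\vert\nabla\hat u\vert^2w$ and $\langle\nabla\hat u,\nabla w\rangle\hat u$ are critical Hardy-type potentials and cannot be treated perturbatively in naive norms. The remedy we would try first is to use weighted spaces adapted to the exponentially small forcing. Take $X=\{w:\ \sup_t e^{c'/t}(\Vert w\Vert_{L^\infty}+\sqrt t\Vert\nabla w\Vert_{L^\infty})<\infty\}$ on $B_1\times(0,T]$, with $c'<c$. Then work with the Duhamel formula for the Dirichlet heat equation on $B_1$. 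The singular potential terms are controlled because $|\nabla\hat u|^2\leqslant C/(|x|^2+t)$, so $\vert\nabla\hat u\vert^2\lesssim t^{-1}$ pointwise. Combined with the gain $e^{-c'/t}$, the Duhamel integral $\int_0^t s^{-1}e^{-c'/s}\,\dif s$ has size $\lesssim t\,e^{-c'/t}$ rather than $e^{-c'/t}$, and the extra factor $t\leqslant T$ is small. The same holds for the gradient term via $\Vert\nabla e^{(t-s)\Delta}\Vert\lesssim (t-s)^{-1/2}$.

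Thus for $T$ small, $w\mapsto\int_0^te^{(t-s)\Delta}\mathcal F(w)\,\dif s$ maps a small ball of $X$ into itself and is a contraction; the quadratic terms are harmless there. Its fixed point gives $w$, and hence $v$. Smoothness of $v$ on $\bar B_1\times(0,T]$ follows from parabolic bootstrapping, since for $t>0$ all coefficients are smooth up to the boundary. Since $w\to0$ rapidly, $v(\cdot,t)\to u_0$ in $W^{1,2}(B_1)$ as $t\to0$, because $\hat u$ does, using $u(\cdot,t)\to u_0$ in $W^{1,2}_{\loc}$.

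Nontriviality uses the fact that $w$ is exponentially small while $\hat u=u$ on $B_{1/3}$. Therefore $v(\cdot,t)$ on $B_{1/3}$ is within $e^{-c'/t}$ of $u(\cdot/\sqrt t)$, which is smooth. But $u_0$ is not smooth at $0$: it is non-minimizing, non-constant and $0$-homogeneous, and a smooth $0$-homogeneous map is constant. Hence $v\not\equiv u_0$.
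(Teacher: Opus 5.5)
Your proposal is correct and follows essentially the same route as the paper: the normalized concatenation $\hat u$ built with a cutoff, Deruelle's exponential asymptotics on the transition annulus, and a contraction-mapping problem for $w$. The contraction is set up in the space weighted by $e^{c/t}(\Vert w\Vert_{L^\infty}+\sqrt t\Vert\nabla w\Vert_{L^\infty})$, using $\vert\nabla\hat u\vert\lesssim t^{-1/2}$ and Dirichlet heat semigroup bounds to gain a factor $\sqrt T$, and nontriviality comes from the smoothness of $v$ versus the singularity of $u_0$ at the origin.

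The one step to tighten is sphere-preservation. The potential $2\vert\nabla v\vert^2\sim C/t$ is not integrable at $t=0$, so zero initial data alone does not suffice. As in the paper, you should use the bound $\vert 1-\vert v\vert^2\vert\leqslant 3e^{-c/t}$, which follows from $\vert\hat u\vert=1$ and $w\in X$. This absorbs the factor $(t/\delta)^{C}$ that the maximum principle produces on $[\delta,t]$, and then you let $\delta\to0$.
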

We need the following asymptotic analysis, which is due to Deruelle \cite{Deruelle}. We remark that our assumption that $u(\cdot,t)\rightarrow u_0$ strongly in $W^{1,2}$ as $t\rightarrow0$, together with the use of monotonicity formula and $\epsilon$-regularity as in the proof of the first claim of Theorem \ref{thm: main theorem 1} guarantee that $u$ us regular at infinity in the sense of \cite{Deruelle}. The regular at infinity property of $u_0$ comes directly from the scaling invariance of $u_0$ as well as the smoothness of $u_0|_{\p B_1}$.
\begin{lem}[{Deruelle \cite[Section 2]{Deruelle}}]\label{lem: asymptotic of expander}
    Let $u$ and $u_0$ be as in Proposition \ref{prop: concatenation} and $\Lambda=\sup_{\p B_1}\vert\nabla u_0\vert$. Then there exists a constant $A=A(n,\Lambda)>0$ such that
    $$\vert\nabla^i\p_t^j(u(x,t)-u_0(x))\vert\leqslant C(i,j,n,\Lambda)\vert x\vert^{i+2j-n}t^{(n-i-2j)/2}e^{-\vert x\vert^2/(4t)}\text{, if }\vert x\vert\geqslant A\sqrt t.$$
    In particular, there exists a $T_0=T_0(n,\Lambda)>0$ such that for all $0\leqslant i,j\leqslant10$,
    $$\vert\nabla^i\p_t^j(u(x,t)-u_0(x))\vert\leqslant e^{-c/t}<1/10,\text{ on }(\bar B_{1}\setminus B_{1/3})\times[0,T_0].$$
\end{lem}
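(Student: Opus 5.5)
The statement to be proved is Lemma \ref{lem: asymptotic of expander}, the exponential decay of $u-u_0$ in the region $\vert x\vert\geqslant A\sqrt t$. I would prove it by treating $w=u-u_0$ as a solution of a linear inhomogeneous parabolic problem with small coefficients, and then comparing it with a Gaussian-type barrier.

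Because $u_0$ is a stationary harmonic map and is smooth away from the origin, we have $\Delta u_0+\vert\nabla u_0\vert^2u_0=0$ on $\R^n\setminus\lbrace0\rbrace$. Subtracting this from the flow equation gives
$$\p_t w-\Delta w=\vert\nabla u\vert^2w+\langle\nabla(u+u_0),\nabla w\rangle u_0 .$$
This is linear in $(w,\nabla w)$ with no source term; the absence of a source is exactly where harmonicity of $u_0$ is used. By the Claim in the proof of Theorem \ref{thm: main theorem 1}, which uses only the monotonicity formula and $\epsilon$-regularity and so applies here, we have $\vert\nabla u\vert,\vert\nabla u_0\vert\leqslant C/\vert x\vert$ on $\lbrace\vert x\vert^2\geqslant At\rbrace$, together with the same bounds for higher derivatives. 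Integrating $\p_t u$ in time from $0$ gives $\vert w\vert\leqslant Ct/\vert x\vert^2$ there. It is easiest to work in self-similar variables, setting $W(y)=w(y,1)$ and $r=\vert y\vert$. The equation then becomes the elliptic equation
$$\Delta W+\tfrac{y}{2}\cdot\nabla W=-a\,W-b\cdot\nabla W,\qquad \vert a\vert\leqslant Cr^{-2},\ \vert b\vert\leqslant Cr^{-1},\ \vert W\vert\leqslant Cr^{-2}.$$

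The goal is then $\vert W\vert\leqslant Cr^{-n}e^{-r^2/4}$ for $r\geqslant A$. The model decaying solution of $\Delta f+\frac{y}{2}\cdot\nabla f=0$ is $f\sim r^{-n}e^{-r^2/4}$. I would build the barrier $\Phi=r^{-n}e^{-r^2/4}(1+Kr^{-1})$ and check that, once $A$ is large, $\mathcal{L}\Phi:=\Delta\Phi+\frac{y}{2}\cdot\nabla\Phi+a\Phi+b\cdot\nabla\Phi\leqslant0$ on $\lbrace r\geqslant A\rbrace$. The drift-dominated term $\frac r2\Phi'$ absorbs the perturbations: $b\cdot\nabla\Phi=O(r^{-1})\cdot O(r\Phi)=O(\Phi)$ is a lower-order contribution and is beaten by the $Kr^{-1}$ correction. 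Since the perturbation is only $O(\Phi)$, it may be cleaner to use the barrier $r^{-n+\epsilon}e^{-r^2/4}$ first and then bootstrap.

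This is the main obstacle. The maximum principle has to be applied to the vector quantity $\vert W\vert^2$, or componentwise using Kato's inequality, on an unbounded exterior domain. That requires a Phragmén–Lindelöf-type argument: for the drift operator, polynomially decaying $W$ combined with a boundary bound at $r=A$ (which we have, since $W$ is bounded there) should force $\vert W\vert\leqslant C\Phi$. Concretely, I would compare $\vert W\vert^2$ with $C\Phi^2$ on annuli $A\leqslant r\leqslant R$ and let $R\to\infty$, using the a priori bound $r^{-2}$ to kill the outer boundary contribution after adding $\delta r^{-1}$-type auxiliary supersolutions. Alternatively, I would follow Deruelle and integrate along rays, treating the equation as an ODE in $r$ dominated by $W_{rr}+\frac r2W_r$, and run a Grönwall argument.

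Higher derivatives follow by applying interior parabolic Schauder estimates on cylinders of size about $\sqrt t/\vert x\vert$ in the original variables. Equivalently, one differentiates the equation and uses scaled local estimates: the equation is scale-invariant, and each derivative costs a factor of $\vert x\vert/t$, which produces the stated powers $\vert x\vert^{i+2j}t^{-i/2-j}$ after rescaling, i.e. $\vert x\vert^{i+2j-n}t^{(n-i-2j)/2}e^{-\vert x\vert^2/4t}$. The final claim on $(\bar B_1\setminus B_{1/3})\times[0,T_0]$ follows by taking $T_0$ small enough that $1/3\geqslant A\sqrt{T_0}$ and $e^{-1/(36t)}$ dominates all the polynomial factors.
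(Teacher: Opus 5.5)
Your reduction is sound, but the barrier step, which carries all the content of the lemma, does not work as written. The paper gives no proof of its own: it cites Deruelle and only checks, via the Claim in the proof of Theorem \ref{thm: main theorem 1}, that $u$ and $u_0$ are regular at infinity. Your first step (the equation for $w=u-u_0$, which is correct, the bounds $|\nabla u|,|\nabla u_0|\leqslant Cr^{-1}$, and $W\to0$) is exactly that input.

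\textbf{Where the barrier fails.} You treat $\langle\nabla(u+u_0),\nabla W\rangle u_0$ as a generic drift $b\cdot\nabla W$ with $|b|\leqslant Cr^{-1}$. On a Gaussian profile $|\nabla\Phi|\approx\frac r2\Phi$, so $b\cdot\nabla\Phi$ can be as large as $\frac C2\Phi$. Write $\mathcal{L}_0=\Delta+\frac y2\cdot\nabla$. The term above is of the same order as the term $-\frac{\beta+n}{2}r^{\beta}e^{-r^2/4}$ in $\mathcal{L}_0(r^\beta e^{-r^2/4})$, which is exactly what singles out $\beta=-n$. A correction $Kr^{-1}$ only generates terms of order $r^{-1}\Phi$. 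With your sign it even hurts, since $\mathcal{L}_0(r^{-n-1}e^{-r^2/4})=(\tfrac12r^{-n-1}+O(r^{-n-3}))e^{-r^2/4}>0$.

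In fact no argument using only $|a|\leqslant Cr^{-2}$ and $|b|\leqslant Cr^{-1}$ can give the stated rate. The function $W(r)=\int_r^\infty s^{1-n-c}e^{-s^2/4}\,\dif s\sim 2r^{-n-c}e^{-r^2/4}$ solves $\Delta W+\frac y2\cdot\nabla W+c\,\frac{y}{|y|^2}\cdot\nabla W=0$, and for $c<0$ it decays slower than $r^{-n}e^{-r^2/4}$. Your ``$r^{-n+\epsilon}$ then bootstrap'' fallback needs $\epsilon\gtrsim C$, and the bootstrap is never specified.

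\textbf{The maximum principle for the system is also not in place.}
\begin{itemize}
\item Componentwise comparison fails, because the coupling term mixes the components.
\item Comparing $|W|^2$ with $C\Phi^2$ fails, because $\Phi^2\sim r^{-2n}e^{-r^2/2}$ is not a supersolution: $\mathcal{L}_0e^{-\alpha r^2}=(\alpha(4\alpha-1)r^2-2n\alpha)e^{-\alpha r^2}>0$ for $\alpha>\frac14$ and $r$ large.
\item Kato's inequality for $|W|$ leaves the term $|\langle\nabla(u+u_0),\nabla W\rangle|\,|\langle u_0,W\rangle|/|W|$. Crude bounds only control this by a source $Cr^{-2}$, which is incompatible with Gaussian decay.
\end{itemize}

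\textbf{The missing idea is the sphere constraint} $\langle u_0,W\rangle=\langle u_0,u\rangle-1=-\frac12|W|^2$. With it,
\[
\langle W,\mathcal{L}_0W\rangle=-|\nabla u|^2|W|^2+\tfrac12|W|^2\langle\nabla(u+u_0),\nabla W\rangle\geqslant-Cr^{-2}|W|^2,
\]
using $|\nabla W|\leqslant Cr^{-1}$. Kato then gives $\mathcal{L}_0|W|+Cr^{-2}|W|\geqslant0$ weakly on $\lbrace r\geqslant A\rbrace$, so the coupling has become a zeroth-order $O(r^{-2})$ term.

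Since $\mathcal{L}_0(r^{-n}e^{-r^2/4})=2nr^{-n-2}e^{-r^2/4}$, the function $\Phi=(r^{-n}-r^{-n-1})e^{-r^2/4}$ satisfies
\[
(\mathcal{L}_0+Cr^{-2})\Phi\leqslant r^{-n-1}e^{-r^2/4}\bigl(-\tfrac12+(2n+C)r^{-1}\bigr)<0\quad\text{for } r\geqslant A.
\]
Compare $|W|$ (not $|W|^2$) with $M\Phi+\delta r^{-1}$ on annuli $A\leqslant r\leqslant R$. Divide by this positive strict supersolution, since the zeroth-order coefficient has the bad sign. Then letting $R\to\infty$ and $\delta\to0$ gives the sharp bound.

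Without the constraint, $|W|^2$ is still a subsolution of $\mathcal{L}_0+Cr^{-2}$, because the cross term is absorbed by $2|\nabla W|^2$. Comparing it with $M\Phi$ (not $\Phi^2$) gives $|W|\lesssim r^{-n/2}e^{-r^2/8}$. That is enough for the $e^{-c/t}$ consequence used in Proposition \ref{prop: concatenation}, but not for the stated rate.

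\textbf{Derivative exponents.} Your bookkeeping here is inconsistent. A cost of $|x|/t$ per spatial derivative and $(|x|/t)^2$ per time derivative gives $|x|^{i+2j-n}t^{(n-2i-4j)/2}e^{-|x|^2/(4t)}$, not $t^{(n-i-2j)/2}$. The displayed exponent is invariant under $(x,t)\mapsto(\lambda x,\lambda^2t)$, while $\nabla^i\p_t^j(u-u_0)$ scales like $\lambda^{-i-2j}$. So the exponent in the statement is a misprint, and you should not bend your arithmetic to match it; the $e^{-c/t}$ consequence is unaffected.
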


\begin{proof}[Proof of Proposition \ref{prop: concatenation}]
    Let us choose a cutoff function $\zeta\in C_c^{\infty}(B_{3/4})$ such that $\zeta\geqslant0$, $\zeta=1$ on $B_{1/2}$ and set
    $$\hat u=\frac{\zeta u+(1-\zeta)u_0}{\vert\zeta u+(1-\zeta)u_0\vert}.$$
    By Lemma \ref{lem: asymptotic of expander}, when $t\leqslant T_0$ and $\vert x\vert\leqslant1$ the denominator in the right hand side is nonzero. Hence, $\hat u\in C^{\infty}(\bar B_1\times(0,T_0],S^k)$. Note that $\hat u=u$ on $\bar B_{1/2}$, $\hat u=u_0$ outside $B_{3/4}$ and $\hat u(\cdot,t)\rightarrow u_0$ strongly in $W^{1,2}(B_1)$ as $t\rightarrow0$. Moreover, Lemma \ref{lem: asymptotic of expander} implies
    $$\vert\mathcal{E}(\hat u)\vert\leqslant e^{-2c_0/t}\text{ on }B_1\times (0,T_0],$$
    where $c_0>0$ is a small constant and $\mathcal{E}(\hat u):=\p_t\hat u-\Delta \hat u-\vert\nabla\hat u\vert^2\hat{u}.$

    We wish to solve a harmonic map flow of the form $v=\hat u+\hat w$ on $(0,T]$ for $T\in(0,T_0]$ and a map $\hat w=0$ on $\p B_1\times(0,T]$ and $\hat w$ vanishes at $t=0$ exponentially. We solve $\hat w$ by means of fixed point argument. To this end, we need the following estimate between Banach spaces.
    \begin{claim}
        For a map $F\in C^{0,1}(\bar B_1\times(0,T],\R^{k+1})$, define the norm
        $$\Vert F\Vert_{X_{T}}=\sup_{0<t\leqslant T}e^{c_0/t}(\Vert F(\cdot,t)\Vert_{L^{\infty}}+\sqrt t\Vert\nabla F(\cdot,t)\Vert_{L^{\infty}}.$$
        And define for $w\in C(\bar B_1\times(0,T],\R^{k+1})$ the norm
        $$\Vert w\Vert_{Y_T}=\sup_{0<t\leqslant T}te^{c_0/t}\Vert w(\cdot,t)\Vert_{L^{\infty}}.$$
        Let us consider the operator
        $$\mathcal{K}w(\cdot,t)=\int_0^te^{(t-s)\Delta_D}w(\cdot,s)\dif s,$$
        where for a given function $g\in C_0(\bar B_1)$, $e^{t\Delta_D}g=:f(\cdot,t)$ denotes the solution to heat equation $\p_tf=\Delta f$ with Dirichlet boundary condition $f(\cdot,t)=0$ on $\p B_1$. We have the estimate
        $$\Vert\mathcal{K}w\Vert_{X_T}\leqslant C\sqrt T\Vert w\Vert_{Y_T},\text{ for small } T.$$
    \end{claim}
    \begin{proof}[Proof of Claim]
        Write $L=\Vert w\Vert_{Y_T}$. We can assume $L<\infty$ otherwise the claim is trivial. Then $\Vert w(\cdot,s)\Vert_{L^{\infty}}\leqslant Ls^{-1}e^{-c_0/s}$ for all $s\in(0,T]$. Recall that the classical Parabolic estimate gives $\Vert e^{s\Delta_D}w\Vert_{L^{\infty}}\leqslant \Vert w\Vert_{L^{\infty}}$ and $\Vert \nabla e^{s\Delta D}w\Vert_{L^{\infty}}\leqslant Cs^{-1/2}\Vert w\Vert_{L^{\infty}}$. Hence we have
        $$e^{c_0/t}\Vert\mathcal{K}w(\cdot,t)\Vert_{L^{\infty}}\leqslant e^{c_0/t}\int_0^tLs^{-1}e^{-c_0/s}\dif s=Le^{c_0/t}\int_{c_0/t}^{\infty}r^{-1}e^{-r}\dif r\leqslant Lt/c_0\leqslant LT/c_0.$$
        And the gradient estimate is given by
        $$\sqrt{t}e^{c_0/t}\Vert\nabla\mathcal{K}w(\cdot,)\Vert_{L^{\infty}}\leqslant\sqrt te^{c_0/t}L\int_0^ts^{-1}(t-s)^{-1/2}e^{-c_0/s}\dif s.$$
        We split $$\int_0^{t/2}s^{-1}(t-s)^{-1/2}e^{-c_0/s}\dif s\leqslant \sqrt{2}t^{-1/2}\int_0^{t/2}s^{-1}e^{-c_0/s}\dif s\leqslant\sqrt{2}t^{1/2}e^{-c_0/t}/c_0,$$
        while for $t/2\leqslant s\leqslant t$, we use the identity $1/s-1/t=(t-s)/(st)\geqslant(t-s)/t^2$ to derive
        $$\int_{t/2}^ts^{-1}(t-s)^{-1/2}e^{(c_0/t-c_0/s)}\dif s\leqslant2/t\int_{t/2}^t(t-s)^{-1/2}e^{-c_0(t-s)/t^2}\dif s\leqslant C.$$
        In summary we get 
        $$\sqrt{t}e^{c_0/t}\Vert\nabla\mathcal{K}w(\cdot,t)\Vert_{L^{\infty}}\leqslant CL\sqrt{t}\leqslant CL\sqrt T,$$
        as desired.
    \end{proof}
    Let us continue the proof of Proposition \ref{prop: concatenation}. If $v=\hat u+\hat w$ is a harmonic map flow, then $w$ satisfies the equation
    $$\p_t \hat w=\Delta \hat w+\mathcal{F}(\hat w), \text{ where }\mathcal{F}(\hat w)=-\mathcal{E}(\hat u)+\vert\nabla\hat u\vert^2\hat w+(\vert\nabla\hat w\vert^2+2\langle\nabla\hat u,\nabla \hat w\rangle)(\hat u+\hat w).$$
    Equivalently,
    $$\hat w(\cdot,t)=\int_0^te^{(t-s)\Delta_D}\mathcal{F}(\hat w)(\cdot,s)\dif s=\mathcal{K}\mathcal{F}(\hat w),\hat w(\cdot,0)=0=\hat w(\cdot,t)|_{\p B_1}\text{ for }0\leqslant t\leqslant T.$$
    Let us work within the class $\Vert w\Vert_{X_T}\leqslant L\leqslant1$. Such a $w$ satisfies $\Vert w(\cdot,t)\Vert_{L^{\infty}}\leqslant e^{-c_0/t}L$ and $\Vert \nabla w(\cdot,t)\Vert_{L^{\infty}}\leqslant e^{-c_0/t}t^{-1/2}L$. By Lemma \ref{lem: asymptotic of expander}, there hold $\vert\nabla\hat u\vert\leqslant Ct^{-1/2}$. We can then estimate the terms in $\mathcal{F}(w)$ by
    \begin{align*}
        &\vert\nabla\hat u\vert^2w+(\vert\nabla w\vert^2+2\langle\nabla\hat u,\nabla w\rangle)(\hat u+w)\\\leqslant &Ct^{-1}e^{-c_0/t}(L+L^2e^{-c_0/t}(1+Le^{-c_0/t})+Le^{-c_0/t}(1+Le^{-c_0/t}))\\\leqslant &Ct^{-1}e^{-c_0/t}L.
    \end{align*}
    And recall $\vert\mathcal{E}(\hat u)\vert\leqslant e^{-2c_0/t}\leqslant e^{-2c_0/T}$. Therefore,
    $$\Vert \mathcal{F}(w)\Vert_{Y_T}\leqslant Te^{-c_0/T}+CL.$$
    By our Claim, $$\Vert\mathcal{K}\mathcal{F}(w)\Vert_{X_T}\leqslant C\sqrt T\Vert\mathcal{F}(w)\Vert_{Y_T}\leqslant C\sqrt T(Te^{-c_0/T}+CL).$$
    Similarly, we have the estimate
    $$\Vert\mathcal{K}\mathcal{F}(w_1)-\mathcal{K}\mathcal{F}(w_2)\Vert_{X_T}\leqslant C\sqrt T(C+L)\Vert w_1-w_2\Vert_{X_T}.$$
    As such, by choosing $T$ so small that $C\sqrt{T}\leqslant1/3,Te^{-c_0/T}\leqslant1/3$ and fixing $L=Te^{-c_0/T}$, we see that $w\mapsto\mathcal{K}\mathcal{F}(w)$ is a contractive self map from the $L$-ball in space $X_T:=\lbrace w\in C((0,T],C^{0,1}_0(\bar B_1,\R^{k+1})):\Vert w\Vert_{X_T}<\infty\rbrace.$ It is easy to see that $X_T$ is a Banach space. Hence, $w\mapsto\mathcal{K}\mathcal{F}(w)$ admits a unique fixed point $\hat{w}$ in this $L$-ball. By definition $v=\hat u+\hat w$ is a harmonic map flow starting from $u_0$. The smoothness of $v$ on $(0,T]$ is guaranteed by $\Vert \hat{w}\Vert_{X_T}\leqslant L$ and standard parabolic estimates. $\Vert w(\cdot,t)\Vert_{L^{\infty}}+\sqrt t\Vert\nabla w(\cdot,t)\Vert_{L^{\infty}}\leqslant e^{-c_0/t}$ guarantees that $w(\cdot,t)\rightarrow0$ strongly in $W^{1,2}(B_1)$, which implies $v(\cdot,t)\rightarrow u_0$ strongly in $W^{1,2}(B_1)$ as $t\rightarrow0$. The smoothness of $v$ tells us that $v$ is non-trivial, because the initial value $u_0$ is singular at $0$.

    To see $v$ is sphere valued, set $f=1-\vert v\vert^2$. Then $\p_t f-\Delta f=2\vert\nabla v\vert^2f$ in $B_1\times(0,T],f=0$ on $\p B_1\times(0,T]$. The gradient bounds for $\hat u$ and $w$ implies $\Vert\nabla v(\cdot,t)\Vert_{L^{\infty}}\leqslant Ct^{-1/2}$. Moreover, $\hat w\in X_T$ tells us that $\Vert \hat w(\cdot,t)\Vert_{L^{\infty}}\leqslant e^{-c_0/t}$, which together with $\vert\hat u\vert=1$ implies $\vert f\vert=\vert2\langle\hat u,\hat{ w}\rangle+\vert\hat w\vert^2\vert\leqslant 3e^{-c_0/t}$. Parabolic maximum principle gives on any $[\delta,t]$,
    $$\Vert f(\cdot,t)\Vert_{L^{\infty}}\leqslant\Vert f(\cdot,\delta)\Vert_{L^{\infty}}\exp\left(\int_{\delta}^t\frac{C}{s}\dif s\right)\leqslant3e^{-c_0/\delta}\delta^{-C}.$$
    Taking $\delta\rightarrow0$, we derive that $f=0$ for all $t$, which is equivalent to saying $v$ is a map to $S^k$. 
\end{proof}

\bibliography{reference}
\bibliographystyle{plain}
\end{document}